\tikzset{
  shiftup/.style = {transform canvas={yshift=0.3ex}},
  shiftdown/.style = {transform canvas={yshift=-0.3ex}},
}
\newcommand{\edit}[1]{}
\DeclareMathOperator{\Gal}{Gal}
\DeclareMathOperator{\Br}{Br}
\DeclareMathOperator{\Spec}{Spec}
\DeclareMathOperator{\rmfrac}{frac}
\DeclareMathOperator{\ann}{ann}
\def\map#1{\rto^{#1}}
\newcommand{\F}{\mathcal{F}}
\newcommand{\X}{\mathfrak{X}}
\newcommand{\frakm}{\mathfrak{m}}
\newcommand{\frakp}{\mathfrak{p}}
\renewcommand{\H}{\mathscr{H}}
\renewcommand{\a}{\underline{a}}
\newcommand{\mathdot}{{\mathbf{\scriptscriptstyle\bullet}}}
\newcommand{\et}{{\mathrm{et}}}
\newcommand{\Sm}{\mathbf{Sm}}
\newcommand{\LL}{\mathbb{L}}
\newcommand{\HH}{\mathbb{H}}
\newcommand{\tr}{\mathrm{tr}}
\newcommand{\Rtr}{R_{\tr}}
\newcommand{\Ztr}{\Z_{\tr}}
\newcommand{\grX}{\mathfrak{X}}
\newcommand{\DM}{\mathbf{DM}}
\newcommand{\cf}{{\it cf.\ }}
\def\oo{\otimes}
\def\muell#1{\mu_\ell^{\oo #1}}
\def\nis{{\text{nis}}}
\numberwithin{equation}{section}
\newtheorem{thm}[equation]{Theorem}
\newtheorem{prop}[equation]{Proposition}
\newtheorem{lem}[equation]{Lemma}
\newtheorem{cor}[equation]{Corollary}
\newtheorem{question}[equation]{Question}
\theoremstyle{definition}
\newtheorem{defn}[equation]{Definition}
\theoremstyle{remark}
\newtheorem{rem}[equation]{Remark}
\newtheorem*{remm}{Remark}
\newtheorem{exam}[equation]{Example}
\newtheorem{exams}[equation]{Examples}
\title{Principal ideals in mod-$\ell$ Milnor $K$-theory}
\author{Charles Weibel}
\address{Math.\ Dept., Rutgers University, New Brunswick, NJ 08901, USA}
\email{weibel@math.rutgers.edu}\urladdr{http://math.rutgers.edu/~weibel}
\thanks{Weibel was supported by NSA and NSF grants, 
and by the IAS Fund for Math}
\thanks{Zakharevich was supported by an NSF MSRFP, the University of
  Chicago and by the IAS Fund for Math}
\author{Inna Zakharevich}
\address{Math.\ Dept., University of Chicago, Chicago, IL 60637, USA}
\email{zakh@math.uchicago.edu}\urladdr{http://math.uchicago.edu/~zakh}
\date{\today}
\begin{document}

\begin{abstract}
Fix a symbol $\underline{a}$ in the mod-$\ell$ Milnor $K$-theory of a field
$k$, and a norm variety $X$ for $\underline{a}$. We show that the ideal
generated by $\underline{a}$ is the kernel of the $K$-theory map induced by
$k\subset k(X)$ and give generators for the annihilator of the ideal.  When
$\ell=2$, this was done by Orlov, Vishik and Voevodsky.
\end{abstract}

\maketitle


Let $\ell$ be a prime and $k$ a field containing $1/\ell$.  Given units
$a_1,\ldots,a_n\in k^\times$ we can form the Steinberg symbol 
$\a = \{a_1,\ldots,a_n\}$ in $K^M_n(k)$; we wish to study the ideal
$(\a)$  generated
by $\a$ in $K^M_n(k)/\ell$.  What is the quotient ring $(K^M_*(k)/\ell)/(\a)$,
and what is the annihilator ideal $\ann(\a)$, so that $(\a) =
(K^M_*(k)/\ell)/\ann(\a)$?  

Here is the main result of this paper; it was proven for $\ell=2$ by
Orlov, Vishik and Voevodsky in \cite[2.1]{ovv}.

\begin{thm}\label{ovv3.3}
Suppose that $\mathrm{char}\,k = 0$, and let $X$ be a 
norm variety for a nontrivial symbol
$\a$ in $K^M_n(k)/\ell$.  Then:
\begin{itemize}

\item[(a)] the kernel of $K^M_*(k)/\ell\map{}K^M_*(k(X))/\ell$
is the ideal of $K^M_*(k)/\ell$ generated by $\a$;

\item[(b)] the annihilator of $\a$ is the ideal of $K^M_*(k)/\ell$
generated by the norms
\[ 
\{ N(\alpha) \in K^M_*(k)/\ell \,|\, \alpha \in K^M_*(k(x)),\ 
x \textrm{ a closed point in }X \}.
\]
\end{itemize}
\end{thm}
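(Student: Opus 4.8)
The plan is to translate both statements into motivic cohomology via the norm residue isomorphism $K^M_*(k)/\ell\cong\bigoplus_q H^{q,q}(k,\mathbb{Z}/\ell)$, under which $\a$ becomes a class in $H^{n,n}(k,\mathbb{Z}/\ell)$, the ideal $(\a)$ becomes the image of cup product $\cup\,\a$, the annihilator $\ann(\a)$ becomes $\ker(\cup\,\a\colon H^{p,q}(k)\to H^{p+n,q+n}(k))$, and the extension map becomes restriction $\mathrm{res}_{k(X)/k}$. The central device is the \v{C}ech simplicial scheme $\check C(X)$, with augmentation $\check C(X)\to\Spec k$ and reduced motive $\widetilde C$ sitting in a distinguished triangle $\widetilde C\to M(\check C(X))\to\mathbb{Z}\xrightarrow{+1}$ in $\DM$. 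Two inputs from the theory of norm varieties and Rost motives drive the argument: (i) $\widetilde C$ becomes zero after base change to any field $F/k$ with $X(F)\neq\emptyset$, so that restriction $H^{*,*}(\check C(X))\to H^{*,*}(k(X))$ is injective; and (ii) the motivic cohomology $H^{*,*}(\widetilde C)$ is, in low weights, concentrated in a single generator whose image under the connecting map $\partial$ is exactly $\a$ (this is where the Beilinson--Lichtenbaum part of the norm residue theorem controls $H^{p,q}(\widetilde C)$ for $p\le q$).

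For (a), the inclusion $(\a)\subseteq\ker(\mathrm{res}_{k(X)/k})$ is formal, since $\a$ vanishes over $k(X)$ and the kernel is an ideal. For the reverse inclusion I would use (i) to identify $\ker(\mathrm{res}_{k(X)/k})$ with $\ker\big(H^{*,*}(k)\to H^{*,*}(\check C(X))\big)$, which by the long exact sequence of the triangle equals the image of $\partial\colon H^{*-1,*}(\widetilde C)\to H^{*,*}(k)$. Since $\partial$ is $H^{*,*}(k)$-linear, its image is the ideal generated by the images of the generators of $H^{*,*}(\widetilde C)$; by (ii) the only relevant generator maps to $\a$, so the image is precisely $(\a)$.

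For (b), the inclusion ``$\supseteq$'' is the projection formula: a closed point $x$ gives a $k(x)$-rational point of $X$, hence splits the symbol, so $\a_{k(x)}=0$ and $N_{k(x)/k}(\alpha)\cup\a=N_{k(x)/k}(\alpha\cup\a_{k(x)})=0$. For ``$\subseteq$'' I would form the cone $C$ of $\cup\,\a$ in $\DM$ and read $\ann(\a)=\ker(\cup\,\a)$ off its long exact sequence as the image of a connecting homomorphism; the point is to identify this cone (equivalently, the complementary summand to $\widetilde C$) with the motive of the norm variety $X$, so that the connecting map becomes the Gysin pushforward $H^{*,*}(X)\to H^{*,*}(k)$. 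A coniveau-filtration argument together with a moving lemma then expresses every such pushforward as a sum of transfers from the zero-cycles of $X$, that is, as a sum of norms $N_{k(x)/k}(\alpha)$ from closed points, yielding the claimed generators. Poincar\'e duality for the smooth projective $X$, which exchanges restriction with transfer, is the structural reason that (a) and (b) are two faces of one computation.

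The main obstacle is establishing inputs (i) and (ii), and the cone identification in (b), at a general prime $\ell$: these replace the explicit Pfister-quadric computations of \cite{ovv} and rest on the full norm residue theorem and the theory of generalized Rost motives and norm varieties. I expect the hardest single step to be the ``$\subseteq$'' inclusion of (b)---realizing the annihilator as pushforwards from $X$ and then reducing those, by moving, to norms from closed points---since it combines the motivic identification of the cone with a genuinely geometric cycle-theoretic argument.
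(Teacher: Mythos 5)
Your architecture is recognizably the paper's own (the \v{C}ech simplicial scheme $\X$, Beilinson--Lichtenbaum vanishing, Rost-motive input), but what you label ``input (ii)'' is not an input one can quote --- it is the theorem's hard core, and it does not follow from Beilinson--Lichtenbaum. BL gives only the vanishing $H^{p,q}(\tilde\X,\Z/\ell)=0$ for $p\le q+1$ (Lemma \ref{ovv2.2}); it says nothing about the region just above that line, where the boundary classes live. What you actually need --- that $H^{q,q-1}(\X,\Z/\ell)$, $q=n+i$, is generated as a $K^M_*(k)/\ell$-module by the single class $\delta$ lifting $\a$ --- is precisely Corollary \ref{cor:Q-surj}, whose proof consumes most of the paper: the symmetric Rost motive $M=(X,e)$ with its two triangles involving $D$ (Theorem \ref{thm:M}, requiring $s_d(X)\not\equiv0\pmod{\ell^2}$), the operation $Q=Q_{n-1}\cdots Q_0$ together with exactness of Margolis homology on $\tilde\X$ for norm varieties (Lemma \ref{lem:Q-inj}), and Voevodsky's identity $\phi_V=\beta P^b$ (Proposition \ref{rsQ square}). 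Relatedly, your input (i) is false as stated: $H^{*,*}(\X,\Z/\ell)\to H^{*,*}(k(X),\Z/\ell)$ is not injective in all bidegrees --- the class $\delta\in H^{n,n-1}(\X,\Z/\ell)$ is nonzero, while $H^{n,n-1}(k(X),\Z/\ell)=0$ because fields have no mod-$\ell$ motivic cohomology with $p>q$. The correct substitute, and all the paper uses, is the sheaf-level injection $H^0(\X,\H^q(\muell{q}))\hookrightarrow K^M_q(k(X))/\ell$ of Remark \ref{rem:11.1}, fed into the five-term sequence of Proposition \ref{prop:right-seq}; your identification of $\ker(\mathrm{res}_{k(X)/k})$ with the image of $\partial$ has to be rerouted through that.

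For (b), the cone identification is wrong: the cone of $\cup\,\a$ is not $M(X)$ --- the full motive of an $(\ell^{n-1}-1)$-dimensional variety has far too much cohomology. What exists is the triangle $\Rtr(\X)\otimes\LL^d\to M\to D$ for the Rost summand $M=(X,e)$ only, and the annihilator sequence (Proposition \ref{prop:left-seq}) comes from it together with the factorization of $\a\,\cup$ through $H^{i+2d+1,i+d}(D,\Z_{(\ell)})$ (Corollary \ref{induced.map}); again Corollary \ref{cor:Q-surj} is what closes the loop. The reduction of the pushforward to norms from closed points is not a moving-lemma argument but the Gersten-resolution computation of Proposition \ref{H-n-n}, which identifies $H^{BM}_{-i,-i}(X)\cong H^{2d+i,d+i}(X,\Z)$ with the group generated by symbols $[x,\alpha]$, $\alpha\in K^M_i(k(x))$, modulo tame relations, so that the image of $\pi_*$ is the norm subgroup by construction (Example \ref{ex:N=norm}). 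Two further omissions: passing from $\Z_{(\ell)}$- to $\Z/\ell$-coefficients in Proposition \ref{prop:left-seq} requires a closed point of degree $\ell$ on $X$, so that $\mathrm{coker}\,\pi_*$ has exponent $\ell$; and the motivic machinery above requires $\mu_\ell\subset k^\times$, so a separate transfer argument (Proposition \ref{prop:5=>2}, with the intermediate field $k_1=k(\zeta)\cap k(X)$ and its Galois action) is needed to descend to arbitrary $k$ of characteristic zero. As it stands, your proposal is an outline of the known strategy whose decisive steps are either unproven or misattributed.
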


Theorem \ref{ovv3.3} uses the notion of a {\it norm variety}; 
see Definition \ref{def:splitnorm} below.  
The existence of norm varieties is due to Rost; 
the terminology comes from \cite{SJ} and \cite[1.18]{HW}.  

\begin{exams}\label{ex:i=0}
Theorem~\ref{ovv3.3}(a) implies that $K^M_i(k)/\ell \rto K^M_i(k(X))/\ell$
is an injection when $i<n$, that
the kernel of $K^M_n(k)/\ell\rto K^M_n(k(X))/\ell$ is exactly the cyclic
subgroup generated by $\a$ and that the kernel of 
$K^M_{n+1}(k)/\ell\rto K^M_{n+1}(k(X))/\ell$ is the subgroup
$\a\cup k^\times$.

The group of units $b$ in $k^\times/k^{\times\ell}$
such that $\{ a_1,\dots,a_n,b\}=0$ in $K^M_{n+1}(k)/\ell$ forms 
the degree~1 part of the ideal $\ann(\a)$.  This group,
described in Theorem \ref{ovv3.3}(b) was originally described by Voevodsky.
If $H_{p,q}(X)$ is the motivic homology of a norm variety for $\a$, $X$, 
and $k$ has no extensions of degree $\ell$,
Voevodsky proved in \cite[A.1 and 2.9]{SJ} that the pushforward
$\pi_*:H_{-1,-1}(X)\rto H_{-1,-1}(\Spec k)=k^\times$ induces
an exact sequence
\addtocounter{equation}{-1}
\begin{subequations}
\renewcommand{\theequation}{\theparentequation\alph{equation}}
\begin{equation} \label{eq:motiv}
1\rto\bar H_{-1,-1}(X) \map{\pi_*} k^\times \map{\a\cup} K^M_{n+1}(k)/\ell.
\end{equation}
Here $\bar H_{p,q}(X)$ denotes the coequalizer of the two projections
  $H_{p,q}(X\times X)\rightrightarrows H_{p,q}(X)$.
Thus the degree~1 part of $\ann(\a)$ is $\bar H_{-1,-1}(X)$: 
$\{\a,b\}=0$ if and only if $b\in\bar H_{-1,-1}(X)$.

When $n=1$, write $\a=(a)$ for $a\in k^\times$, and set 
$E=k(\root\ell\of{a})$. Then $X=\Spec(E)$ is a norm variety for $\a$.
For simplicity, suppose that $k$ contains an $\ell$-th root of unity, $\zeta$.
The degree~2 part of $(\a)$ is the group of symbols $a\cup b$;
under the isomorphism $H_\et^2(k,\Z/\ell)\cong {}_\ell\!\Br(k)$,
$a\cup b$ is identified with the class of the cyclic algebra 
$A_\zeta(a,b)$ in the Brauer group. Theorem \ref{ovv3.3}
describes the units $b$ for which $A_\zeta(a,b)$ is 
a matrix algebra, and the division algebras (or classes
$[A]\in {}_\ell\!\Br(k)$) which are equivalent to cyclic algebras.
In this case, Kummer theory gives the answer: 
the first group is the image $N(E^\times)$ of the norm map 
$E^\times\rto k^{\times}$, 
and the second group is the class of algebras split by $E$.
(See \cite[6.4.8]{WK}.) 
In fact, we have the classical exact sequence
\begin{equation}\label{eq:n=1i=1}
1 \rto N(E^\times) \rto k^\times  
\map{a\cup} H_\et^2(k,\Z/\ell) \rto H_\et^2(E,\Z/\ell)^{\Gal(E/k)}.
\end{equation}

When $n=1$, Theorem \ref{ovv3.3} states that for every unit $a$ not in
$k^{\times\ell}$ there are exact sequences
\begin{equation}\label{eq:n=1}
1 \rto K^M_i(E)_{\Gal(E/k)} 
\rto^{N} K^M_i(k) \rto^{\cup a} K^M_{i+1}(k)/\ell
\rto (K^M_{i+1}(E)/\ell)^{\Gal(E/k)};
\end{equation}
\end{subequations}
when $i=1$ this is exactly (\ref{eq:n=1i=1}).  This follows from Voevodsky's
Galois computations \cite[3.2 and 3.6]{HW} (\cf \cite[5.2 and 6.11]{mc/2}) and
the fact that $\ell\cdot K^M_i(k)\subseteq N(K^M_i(E))$.
\end{exams}

\smallskip
Theorem \ref{ovv3.3} follows from the more technical Theorem \ref{thm:2->l}.
We note that the analysis in \cite{ovv} did not need to worry about 
roots of unity, as any field of characteristic $0$ contains the 
square roots of unity, and Pfister quadrics always have points
of degree~2.  For an odd prime $\ell$, the existence of a norm variety 
with points of degree $\ell$ is established in \cite[1.21]{SJ} modulo the
Norm Principle, proven in \cite[0.3]{HW-Rost}; see
Chapter 10 of \cite{HW}.

\begin{thm}\label{thm:2->l}
  Let $\mathrm{char}\,k = 0$.  Suppose that $X$ is a norm variety for a symbol
  $\a$ in $K^M_n(k)/\ell$ containing a point $x$ with $[k(x):k]=\ell$.  Write $q
  = n+i$ and let $\widetilde{K}^M_{q}(k(X))/\ell$ denote the equalizer
  of 
  maps $K^M_{q}(k(X))/\ell \rrto K_{q}^M(k(X\times X))/\ell$; $\X$ denotes the
  $0$-coskeleton of $X$.

\noindent(a)
If $\mu_\ell\subset k^\times$, there is an exact sequence for all $i$:
\[
\bar H_{-i,-i}(X) \map{\pi_*} K^M_i(k) \map{\a\cup} K^M_{q}(k)/\ell
\map{\iota} \widetilde{K}^M_{q}(k(X))/\ell \rto H^{q+1,q-1}(\X,\Z/\ell).
\]

\noindent(b) 
If $\mu_\ell\not\subset k^\times$, set $e=[k(\zeta):k]$ and
$X'=X\times_{k_1}k(\zeta)$, where $k_1=k(\zeta)\cap k(X)$.
If $\X'$ denotes the $0$-coskeleton of $X'$ over $k(\zeta)$, then
for all $i$ there is an exact sequence:
  \[
\bar H_{-i,-i}(X)[e^{-1}]\! \map{\pi_*}\! K^M_i(k)[e^{-1}]\! \map{\a\cup}\! 
K^M_{q}(k)/\ell \!\map{\iota}\! \widetilde{K}^M_{q}(k(X))/\ell\!
  \map{}\! H^{q+1,q-1}(\X',\Z/\ell)^G.
  \]
The map $\iota$ is induced by the homomorphism $k \rto k(X)$,
and $G = \Gal(k'/k_1)$.  
\end{thm}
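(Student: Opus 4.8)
The plan is to reinterpret every term as motivic (co)homology and to extract the sequence from the descent data of the simplicial scheme $\X$. By the Nesterenko--Suslin--Totaro theorem together with the Bloch--Kato isomorphism, $K^M_q(F)/\ell \cong H^{q,q}(F,\Z/\ell) \cong H^q_\et(F,\muell{q})$ for every field $F$; under this dictionary $K^M_q(k)/\ell$, $K^M_i(k)=H^{i,i}(k,\Z)$, and the equalizer $\widetilde K^M_q(k(X))/\ell$ become motivic cohomology groups, while $\bar H_{-i,-i}(X)$ is the coequalized motivic homology dual to the equalizer. In this language $\a\cup$ is multiplication by the motivic class of $\a$, $\iota$ is pullback along the augmentation $\epsilon\colon \X \to \Spec k$, and $\pi_*$ is pushforward along $X\to\Spec k$. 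Under smooth-projective duality for the norm variety $X$ the $\iota$-end and the $\pi_*$-end are exchanged (pullback is dual to pushforward, equalizer to coequalizer), so the whole sequence should be read, heuristically, as a single self-dual complex.

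For part (a), assuming $\mu_\ell\subset k$, I would build the sequence from the long exact motivic-cohomology sequence of the distinguished triangle $\tilde M(\X)\to\mathbf{1}\to M(\X)\to\tilde M(\X)[1]$ attached to $\epsilon$, where $\tilde M(\X)$ is the reduced motive. The decisive input is Voevodsky's computation of $\tilde H^{*,*}(\X,\Z/\ell)$ for a norm variety: since $\X$ is \'etale-locally contractible, Bloch--Kato forces vanishing in the Beilinson--Lichtenbaum range $p\le w$, and the first nonvanishing group is one-dimensional, spanned by the Bockstein class $\delta$ of $\a$ in bidegree $(n+1,n)$. Threading the triangle through bidegrees $(q,q)$ and $(q+1,q-1)$, using the coniveau (Gersten) resolution to identify $\widetilde K^M_q(k(X))/\ell$ with the $H^0$ of the \v{C}ech complex of function fields, and identifying the boundary $K^M_i(k)\to K^M_q(k)/\ell$ with $\a\cup$ and the last map $\widetilde K^M_q(k(X))/\ell\to H^{q+1,q-1}(\X,\Z/\ell)$ with a differential in the descent spectral sequence (landing two steps past the Beilinson--Lichtenbaum line), isolates exactly the five displayed terms. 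Exactness at $K^M_q(k)/\ell$ (that $\ker\iota$ is the ideal generated by $\a$, giving Theorem~\ref{ovv3.3}(a)) and at $K^M_i(k)$ (that the annihilator of $\a\cup$ equals $\mathrm{im}\,\pi_*$, giving Theorem~\ref{ovv3.3}(b), since $\pi_*$ of a class at a closed point $x$ is the norm $N_{k(x)/k}$) then drop out of the same long exact sequence, with the degree-$\ell$ point $x$ furnishing the transfers that realize annihilating classes as norms.

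The main obstacle is exactness at $\widetilde K^M_q(k(X))/\ell$: this group sits past the Beilinson--Lichtenbaum line and so is not governed by \'etale cohomology alone, and showing $\mathrm{im}\,\iota=\ker\bigl(\widetilde K^M_q(k(X))/\ell\to H^{q+1,q-1}(\X,\Z/\ell)\bigr)$ requires the full Rost-motive computation matched against the coniveau filtration on $H^{*,*}(\X)$. The second delicate point, and the one genuinely new compared with \cite{ovv} (where $\mu_2$ is automatically present), is the treatment of roots of unity in part (b).

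For part (b), with $\mu_\ell\not\subset k$, I would deduce the sequence from (a) by a transfer argument over $k':=k(\zeta)$. Since $e=[k':k]$ divides $\ell-1$ it is prime to $\ell$, so $\tfrac1e\,\tr\circ\mathrm{res}$ is the identity after inverting $e$. One first checks that $X'=X\times_{k_1}k'$ is a norm variety for the image of $\a$ over $k'$, applies (a) over $k'$ to $X'$, and then descends along the action of $G=\Gal(k'/k_1)$. Inverting $e$ makes the formation of $G$-invariants exact on the relevant terms, converting the $k'$-sequence into the $[e^{-1}]$-localized, $G$-invariant sequence over $k$ asserted in (b). The only care needed is that $\a$ may already partially split over $k_1=k'\cap k(X)$, which is precisely why the descent is taken along $\Gal(k'/k_1)$ rather than $\Gal(k'/k)$.
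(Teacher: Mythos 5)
Your part (b) is sound and is essentially the paper's own reduction (Proposition \ref{prop:5=>2}): $X'=X\times_{k_1}k(\zeta)$ is a norm variety over $k(\zeta)$ by Lemma \ref{basechange}, and the norm and restriction maps, whose composite is multiplication by $e$ prime to $\ell$, give the required diagram chase. Likewise the right-hand end of your part (a) tracks the paper: your ``Bockstein class $\delta$'' and Beilinson--Lichtenbaum vanishing correspond to the coefficient triangle $\Z/\ell(q-1)\xrightarrow{\;\zeta\;}\Z/\ell(q)\to\H^q(\muell{q})[-q]$ of Proposition \ref{prop:coeff-triangle}, to Lemma \ref{ovv2.2}, and to the five-term sequence of Proposition \ref{prop:right-seq}; your Gersten identification of the fourth term is Proposition \ref{prop:4thterm}, although it is not automatic --- one must show that the equalizer $E_1$ in Lemma \ref{lem:Gersten} vanishes, which the paper does by constructing a specialization map $K^M_*(k(y_x))\to K^M_*(k(x))$ splitting $p_1$ on each summand.

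The genuine gap is at the left end. Exactness at $K^M_i(k)$ and at $K^M_q(k)/\ell$ does not ``drop out of the same long exact sequence'': the cone of the augmentation $\X\to\Spec k$ produces no term resembling $\bar H_{-i,-i}(X)$, and your decisive input --- that the first nonvanishing reduced group of $\X$ is one-dimensional, spanned by $\delta$ --- is unproved and is essentially equivalent to the statement being established; Bloch--Kato gives only the vanishing $H^{p,q}(\tilde\X,\Z/\ell)=0$ for $p\le q+1$, nothing about the next range. What the paper actually does is splice in a second, independent exact sequence $\bar H_{-i,-i}(X)\to K^M_i(k)\to H^{i+2d+1,i+d}(D,\Z_{(\ell)})\to0$, obtained from the Rost-motive triangles of Theorem \ref{thm:M} together with the duality identification $H^{2d+i,d+i}(X,\Z)\cong H^{BM}_{-i,-i}(X)$ of Proposition \ref{H-n-n} (this is Proposition \ref{prop:left-seq}; the degree-$\ell$ point is used there to remove the $\Z_{(\ell)}$-localization, not, as you suggest, to ``realize annihilating classes as norms''), and then proves that the gluing map $\alpha$ of Corollary \ref{induced.map} is an isomorphism by a cohomology-operation argument: $Q=Q_{n-1}\cdots Q_0$ is injective on $\HH^1(\X)$ by Margolis-homology exactness (Lemma \ref{lem:Q-inj}), and $s^*r^*(x)=(-1)^{n-1}Q(\delta\cup x)$ via Voevodsky's theorem $\phi_V=\beta P^b$ together with $\beta P^b=(-1)^{n-1}Q_{n-1}$ on classes killed by $Q_0,\dots,Q_{n-2}$ (Proposition \ref{rsQ square}, Corollary \ref{cor:Q-surj}). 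None of this machinery appears in your outline, and the hypothesis that makes it run --- $s_d(X)\not\equiv0\pmod{\ell^2}$, i.e.\ that $X$ is a $\nu_{n-1}$-variety --- is never invoked by you, yet both the Rost motive and the Margolis exactness fail without it. You have also misplaced the difficulty: exactness at $\widetilde{K}^M_{q}(k(X))/\ell$ is the formal Beilinson--Lichtenbaum part of the argument, whereas the Rost motive and the operations $Q_j$ are needed precisely at the $\pi_*$ and $\a\cup$ spots.
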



The sequences \eqref{eq:motiv}, \eqref{eq:n=1i=1} and \eqref{eq:n=1} begin
with an injection.  This is often, but not always, the case.

\begin{question}
In the situation of Theorem~\ref{thm:2->l}(a) 
with $\mu_\ell\subset k^\times$, 
when is $\pi_*$ an injection?
\end{question}

For $i=0$, the map $\pi_*$ is an injection: $\bar{H}_{0,0}(X)=\Z$, and 
its image in $K_0(k)=\Z$ is $\ell\Z$. (This observation goes back to
\cite[8.7.2]{MS}.) This calculation
shows that the mod-$\ell$ reduction 
$\bar{H}_{0,0}(X,\Z/\ell)\rto K^M_0(k)/\ell$ 
of $\pi_*$ is not always an injection.

The map $\pi_*$ is an injection for $i=1$ by equation (\ref{eq:motiv}),
and for $n=1$
by Lemma \ref{lem:H90} below.
However, if $k$ does not contain the $\ell^{th}$ roots of unity, 
$\pi_*$ need not be an injection even for $i=n=1$, as 
the classical Hilbert Theorem 90 can fail; 
see Example \ref{ex:notgalois} below. 

Theorem~\ref{ovv3.3}(b) could be strengthened to only look at norms of
elements in $K^M_1(k)=k^\times$ if we knew that 
the answer to the following question was affirmative: 

\begin{question}\label{q:BassTate}
If $E/F$ is a Galois extension of prime degree,
is $K^M_{n+1}(E)$ always generated by symbols
$\{a_1,...,a_n,b\}$ with $a_i\in F^\times$ and $b\in E^\times$?

It suffices to check the case $n=1$: is $K^M_2(E)$ is always 
generated by symbols $\{a,b\}$ with $a\in F^\times$ and $b\in E^\times$?
\end{question}

If $\ell=2$, $\ell=3$ or $k$ is $\ell$-special, this is the case;
$K^M_2 k(x)$ is generated by symbols $\{a,b\}$ with $a\in k^\times$ and
$b\in k(x)^\times$; see \cite[Lemma 2]{M82}, \cite[p.\,388]{BT}.
By Becher \cite[1.1]{Becher},
$K^M_n k(x)$ is also generated by symbols $\{\alpha,\beta\}$ with
$\alpha\in K^M_{n-m}(k)$, $\beta\in K^M_m(k)$ if $\ell<2^{m+1}$.

The restriction to prime degree is necessary in Question \ref{q:BassTate}.
Becher has pointed out in \cite[3.1]{Becher} that if $E=k(x,y)$
and $F=k(x^\ell,y^\ell)$ then $\{x,y\}$ cannot be written in this
form, as the tame symbol 
$\partial_y:K^M_2(E)\rto k(x)^\times/k^{\times\ell}$ shows.
In this case, $[E:F]=\ell^2$.

\begin{rem}
Although most of our results work over perfect fields of 
arbitrary characteristic, the assumption that $k$ has 
characteristic 0 is needed in two places.
\begin{itemize}
\item[1)]To prove that norm varieties exist for symbols of length $n$.  This
  would go through for any perfect field of positive characteristic (by
  induction on $n$) if we could prove that for symbols of length $n-1$ over $k$,
  a norm variety $Y$ exists which satisfies the {\it Norm Principle} (see
  \cite[0.3]{HW-Rost} or \cite[10.17]{HW}).  The inductive step is given in
  \cite[10.21]{HW}.

\item[2)] We also need characteristic~0 to show that the symmetric
characteristic class $s_d(X)$ of a norm variety is nonzero modulo $\ell^2$.
The proof in characteristic~0 is due to Rost (unpublished), and
given in Proposition 10.13 of \cite{HW},
and depends upon the Connor--Floyd theory of equivariant cobordisms on 
complex $G$-manifolds (as given by Theorem 8.16 in {\it loc.\,cit.})
It is possible that a proof in characteristic $p>0$ could be given
along the lines of \cite[5.2]{SJ}, if we assume resolution of singularities.
\end{itemize}
We will therefore state as many of our results in as much generality as
possible, only restricting to characteristic zero when absolutely necessary.
\end{rem}

\begin{rem}
After writing this paper, we discovered that many of our results
are in Yagita's paper \cite[Thm.\,10.3]{Yagita} and in the Merkurjev-Suslin
paper \cite[2.1]{MS2}.  The basic technique in these papers, and in ours, 
is the same: generalize the ideas in \cite{ovv}, 
using Rost's norm varieties for $\ell>2$.  Yagita's proof is
somewhat sketchy, as it predated a clear understanding of norm varieties.
Merkurjev and Suslin prove Theorem \ref{ovv3.3}(b), but their formulation
is different in the absence of roots of unity.
Since neither of these results directly addresses
the ring structure of $K^M_*(k)/\ell$, we feel that our exposition
should be added to the public record.
\end{rem}

\subsection*{Notation and conventions} We fix a prime $\ell$ and
an $\ell$-th root of unity $\zeta$.  
We write $H^{p,q}(Y,\Z/\ell)$ for $H^p_\nis(Y,\Z/\ell(q))$.

\section{Borel--Moore homology}

The first term in Theorem \ref{thm:2->l} uses the
motivic homology group $H_{-i,-i}(X)$ of a smooth projective variety $X$
(with coefficients in $\Z$).
However, it is more useful to think of it as the Borel--Moore
homology group $H^{BM}_{-i,-i}(X)$, which is covariant 
for proper maps between smooth varieties, and contravariant 
for finite flat maps; see \cite[p.\,185]{biv} or \cite[16.13]{mvw}.

When $X$ is smooth projective, we have $H_{-i,-i}(X)=H^{BM}_{-i,-i}(X)$,
and more generally $H_{p,q}(X,\Z)=H^{BM}_{p,q}(X,\Z)$, because
the natural map from $M(X)=\Ztr(X)$ to $M^c(X)$ in $\DM$ is an 
isomorphism for smooth projective $X$. (Recall that the motivic 
homology groups $H_{p,q}(X,\Z)$ of $X$ are defined to be 
$\Hom_{\DM}(\Z(q)[p],M(X))$, while the 
Borel--Moore homology groups $H^{BM}_{p,q}(X,\Z)$ are defined to be 
$\Hom_{\DM}(\Z(q)[p],M^c(X))$;
see \cite[p.\,185]{biv} or \cite[14.17, 16.20]{mvw}.)
%
We define $H^{BM}_{-i,-i}(X)$ to be $H^{BM}_{-i,-i}(X,\Z)$ if
$\mathrm{char}\,k = 0$, and $H^{BM}_{-i,-i}(X,\Z[1/p])$ if
$\mathrm{char}\,k = p>0$.

The case $i=1$ of the following result was proven in \cite{SJ}.

\begin{prop}\label{H-n-n}
Let $X$ be a smooth variety over a perfect field $k$.
If $i\geq 0$ we have an exact sequence
  \[ 
  \coprod\nolimits_{y}K^M_{i+1}(k(y)) \map{\textrm tame} \coprod\nolimits_{x}
  K^M_i(k(x)) \map{} H^{BM}_{-i,-i}(X) \rto 0.
  \] 
In addition, $H^{BM}_{-i,-i}(X)$ is isomorphic to $H^{2d+i,d+i}(X,\Z)$. More
explicitly, $H^{BM}_{-i,-i}(X)$ is the abelian group 
generated by symbols $[x,\alpha]$, where $x$
is a closed point of $X$ and $\alpha\in K^M_i(k(x))$,
modulo the relations \\
  (i) $[x,\alpha][x,\alpha']=[x,\alpha+\alpha']$ and
  \\
  (ii) the image of the tame symbol $K^M_{i+1}(k(y))\rto\bigoplus
  K^M_{i}(k(x))\rto H^{BM}_{-i,-i}(X)$ is zero for every codimension~1 point $y$
  of $X$.
\end{prop}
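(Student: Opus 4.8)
The plan is to combine Poincaré duality in $\DM$ with the coniveau (Gersten) spectral sequence for motivic cohomology. I would first reduce to the case that $X$ is connected, hence smooth and equidimensional of some dimension $d$: both $H^{BM}_{-i,-i}(-)$ and the cokernel of the tame map split as direct sums over the connected components of $X$, since every closed point and every codimension-one point lies in a unique component. For such $X$ I would invoke the duality $M^c(X)\cong M(X)^*(d)[2d]$ in $\DM$ (\cf \cite[14.17, 16.20]{mvw}), which via tensor--hom adjunction gives
\[
H^{BM}_{-i,-i}(X)=\Hom_{\DM}(\Z(-i)[-i],M^c(X))\cong\Hom_{\DM}(M(X),\Z(d+i)[2d+i])=H^{2d+i,d+i}(X,\Z).
\]
This establishes the claimed identification with $H^{2d+i,d+i}(X,\Z)$ and reduces the problem to computing this motivic cohomology group.

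I would then run the coniveau spectral sequence for the smooth variety $X$ in weight $m=d+i$,
\[
E_1^{p,q}=\coprod\nolimits_{x\in X^{(p)}}H^{q-p}(k(x),\Z(d+i-p))\Longrightarrow H^{p+q}(X,\Z(d+i)),
\]
whose existence over a perfect field rests on homotopy invariance of motivic cohomology, i.e.\ the Gersten resolution for the associated cycle module. Using the vanishing $H^{j}(F,\Z(r))=0$ for a field $F$ and $j>r$, together with $H^{r}(F,\Z(r))=K^M_r(F)$ (Nesterenko--Suslin--Totaro), a short degree count shows that in total degree $2d+i$ the only nonzero $E_1$-term sits in column $p=d$, where $x$ ranges over the closed points and the group is $K^M_i(k(x))$. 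The incoming differential $d_1$ comes from column $p=d-1$, namely from $\coprod_{y\in X^{(d-1)}}K^M_{i+1}(k(y))$, and is exactly the tame symbol; the outgoing differential vanishes since $X^{(d+1)}=\varnothing$.

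Hence $H^{2d+i}(X,\Z(d+i))=E_2^{d,d+i}$ is precisely the cokernel of the tame map, and the spectral sequence degenerates in this total degree (there is nothing else to converge to). Combined with the duality isomorphism, this yields the asserted exact sequence. The explicit presentation by symbols $[x,\alpha]$ is then a reformulation: the generators are the elements of $\coprod_x K^M_i(k(x))$, relation~(i) records $K^M_i$-additivity at each closed point $x$, and relation~(ii) records exactness at the middle term, i.e.\ that the image of the tame symbol is killed.

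The main obstacle is the availability of the Gersten/coniveau resolution for motivic cohomology over an arbitrary perfect field and the identification of its $d_1$ with the classical tame symbol; in positive characteristic this forces one to invert $p$, which is exactly why $H^{BM}_{-i,-i}(X)$ was defined with $\Z[1/p]$-coefficients. Both ingredients are standard but must be cited carefully, and the equidimensionality reduction is what lets the clean weight count $p=d$ go through.
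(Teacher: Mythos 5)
In characteristic zero your argument is correct and is, in substance, the paper's own: the same duality step identifies $H^{BM}_{-i,-i}(X)$ with $H^{2d+i,d+i}(X,\Z)$, and your coniveau spectral sequence computes exactly what the paper computes by taking the Gersten resolution of the homotopy-invariant sheaf $\H^{d+i}$ \cite[24.1, 24.11]{mvw} and running the hypercohomology spectral sequence $E_2^{p,q}=H^p(X,\H^q)\Rightarrow H^{p+q,d+i}(X,\Z)$; the coniveau $E_2$-page is the Zariski cohomology of the Gersten complex, so the two routes are interchangeable, and your degree count (only column $p=d$ survives in total degree $2d+i$, with incoming $d_1$ the tame symbol from $p=d-1$ and no room for higher differentials, by the vanishing $H^j(F,\Z(r))=0$ for $j>r$) is sound.

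The genuine gap is in characteristic $p$, where your treatment of the coefficient bookkeeping is too quick. Motivic duality there is only available with $\Z[1/p]$-coefficients (the paper cites \cite[5.5.14]{Shanekelly}), so your ``dualize first, then compute'' chain proves $H^{BM}_{-i,-i}(X,\Z[1/p])\cong H^{2d+i,d+i}(X,\Z[1/p])$, and the spectral sequence then presents this group by the $p$-localized groups $K^M_i(k(x))[1/p]$ and $K^M_{i+1}(k(y))[1/p]$. But Proposition \ref{H-n-n} asserts the presentation with \emph{integral} Milnor $K$-groups and the isomorphism with $H^{2d+i,d+i}(X,\Z)$ with \emph{integral} coefficients; remarking that $H^{BM}$ was defined with $\Z[1/p]$-coefficients repairs only the left-hand side of the comparison. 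The missing input is Izhboldin's theorem (Lemma \ref{K2:p-divisible}): for $i\ge1$ the groups $K^M_i(k(x))$ ($x$ closed, $\mathrm{trdeg}=0$) and $K^M_{i+1}(k(y))$ ($\mathrm{trdeg}\,k(y)=1$) are uniquely $p$-divisible over a perfect field, hence so is the cokernel of the tame map, so inverting $p$ changes nothing and $H^{2d+i,d+i}(X,\Z)\cong H^{2d+i,d+i}(X,\Z[1/p])$. Moreover the case $i=0$ genuinely fails this divisibility ($K^M_0=\Z$), and the paper must handle it separately via $CH_0(X)\cong H^{2d,d}(X,\Z)$ and $H^{BM}_{0,0}(X)=CH_0(X)$ \cite{V-CH}; your uniform weight count does not see this case distinction. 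This is also why the paper's order of argument differs from yours: it first establishes $A\cong H^{2d+i,d+i}(X,\Z)$ integrally, with no duality, and only afterwards invokes duality over $\Z[1/p]$ — a sequencing your proposal cannot reproduce without adding the divisibility lemma and the separate $i=0$ argument.
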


\begin{proof}
Let $A$ denote the abelian group with generators $[x,\alpha]$
and relations (i) and (ii), described in the Proposition,
and set $d=\dim(X)$. 
We first show that $A$ is isomorphic to $H^d(X,\H^{d+i})$, where 
$\H^q$ denotes the Zariski sheaf associated to the presheaf 
$H^{q,d+i}(-,\Z)$. 
For each $q$, $\H^{q}$ is a homotopy invariant Zariski sheaf, 
by \cite[24.1]{mvw}. As such, it has a canonical flasque
``Gersten'' resolution on each smooth $X$ (given in \cite[24.11]{mvw}),
whose $c^{th}$ term is the coproduct over codimension~$c$ points $z$ of 
the skyscraper sheaves $H^{q-c,d+i-c}(k(z))$, where $z$ has codimension 
$c$ in $X$.  Taking $q=d+i$, and recalling that $K^M_i\cong H^{i,i}$ 
on fields, we see that the skyscraper sheaves in the $(d-1)^{st}$ and
$d^{th}$ terms take values in $K^M_{i+1}(k(y))$ and $K^M_i(k(x))$.
Moreover, 
the map $K^M_{i+1}(k(y))\rto K^M_i(k(x))$ is the tame symbol 
if $x\in\overline{\{ y\}}$, and zero otherwise. 
As $H^d(X,\H^{d+i})$ is obtained by
taking global sections and then cohomology, 
it is isomorphic to $A$.  

Next, we show that $A$ is isomorphic to $H^{2d+i,d+i}(X,\Z)$.
To this end, consider the hypercohomology spectral sequence 
$E_2^{p,q}=H^{p}(X,\H^q) \Rightarrow H^{p+q,d+i}(X,\Z),$
Since $H^{q,d+i}=0$ for $q>d+i$, 
the spectral sequence is zero unless
$p\le d$ and $q\le d+i$. From this we deduce that
$H^{2d+i,d+i}(X,\Z) \cong H^d(X,\H^{d+i})\cong A$. 

Finally, we show that $H^{BM}_{-i,-i}(X)$ is isomorphic to
$H^{2d+i,d+i}(X,\Z)$.
Suppose first that $i=0$. Then the presentation describes
$CH_0(X)\cong H^{2d,d}(X,\Z)$, and by \cite{V-CH} 
we also have $H_{0,0}^{BM}(X)=CH_0(X)$. Thus we may assume that $i>0$.


If $\textrm{char}(k)=0$, the proof is finished by the duality 
calculation, which uses Motivic Duality with $d=\dim(X)$ 
(see \cite[16.24]{mvw} or \cite[7.1]{biv}):
\[
\begin{aligned}
H^{BM}_{-i,-i}(X,\Z) =&
\Hom(\Z,M^c(X)(i)[i])=\Hom(\Z(d)[2d],M^c(X)(d+i)[2d+i]) \\
 =& \Hom_{\mathbf{DM}}(M(X),\Z(d+i)[2d+i])=H^{2d+i,d+i}(X,\Z). 
\end{aligned}
\]

Now suppose that $k$ is a perfect field of $\textrm{char}(k)=p>0$.
As we show below in Lemma  \ref{K2:p-divisible}, 
$K^M_i(k(x))$ and $K_{i+1}^M(k(y)$) are uniquely p-divisible for $i\ge1$
(when $x$ is closed in $X$ and $\textrm{trdeg}_k k(y)=1$).
Thus $A$ must also be uniquely p-divisible.
Since $H^{2d+i,d+i}(X,\Z)\cong\!A$, 
the duality calculation above goes through with $\Z$ replaced by 
$\Z[1/p]$, using the characteristic $p$ version of Motivic Duality (see
\cite[5.5.14]{Shanekelly}) and we have
$H^{BM}_{-i,-i}(X,\Z[1/p])\cong\! H^{2d+i,d+i}(X,\Z[1/p])\cong\! H^{2d+i,d+i}(X,\Z)$.
\end{proof}

\begin{lem}[Izhboldin]\label{K2:p-divisible}
Let $E$ be a field  of transcendence degree~$t$ over a perfect field
$k$ of characteristic~$p$. Then $K^M_{m}(E)$ is uniquely $p$-divisible
for $m>t$.
\end{lem}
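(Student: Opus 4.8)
The plan is to prove that multiplication by $p$ is a bijection on $K^M_m(E)$ for $m>t$, treating surjectivity and injectivity separately. Surjectivity is exactly the statement that $K^M_m(E)/p=0$, and the clean way to see this is through differential forms. By the Bloch--Kato--Gabber theorem, the assignment $\{a_1,\dots,a_m\}\mapsto \frac{da_1}{a_1}\wedge\cdots\wedge\frac{da_m}{a_m}$ induces an injection $K^M_m(E)/p\hookrightarrow\Omega^m_E$, where $\Omega^\bullet_E$ denotes the absolute K\"ahler differentials (over the prime field $\mathbb{F}_p$). It therefore suffices to prove that $\Omega^m_E=0$ for $m>t$, i.e.\ that the $E$-vector space $\Omega^1_E$ has dimension at most $t$.

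Since $k$ is perfect, every element of $k$ is a $p$-th power, so $da=0$ for all $a\in k$ and hence $\Omega^1_k=0$; the first exact sequence of differentials for the tower $\mathbb{F}_p\subset k\subset E$ then identifies the absolute differentials $\Omega^1_E$ with the relative differentials $\Omega^1_{E/k}$. I would bound its dimension by reduction to the finitely generated case: writing $E$ as the filtered union of its finitely generated subextensions $E_\alpha/k$, each $E_\alpha$ is a finitely generated extension of a perfect field, hence separably generated, so $\dim_{E_\alpha}\Omega^1_{E_\alpha/k}=\mathrm{trdeg}_k E_\alpha\le t$. Because $\Omega^1_{E/k}=\varinjlim_\alpha(\Omega^1_{E_\alpha/k}\otimes_{E_\alpha}E)$ is a filtered colimit of $E$-vector spaces of dimension at most $t$, and any finitely many elements of such a colimit already live in a single term (where a linear dependency would persist in the colimit), we get $\dim_E\Omega^1_{E/k}\le t$. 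Taking exterior powers gives $\Omega^m_E=0$ for $m>t$, and hence $K^M_m(E)/p=0$.

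For injectivity I would simply invoke Izhboldin's theorem that $K^M_m(F)$ has no $p$-torsion for any field $F$ of characteristic $p$; applied to $F=E$ this shows that multiplication by $p$ is injective, and combined with the surjectivity above it is an isomorphism, so $K^M_m(E)$ is uniquely $p$-divisible for $m>t$.

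The torsion-freeness is the one genuinely deep ingredient and the main obstacle: whereas the $p$-divisibility reduces transparently to the vanishing of top-degree differential forms, the absence of $p$-torsion in $K^M_m$ in characteristic $p$ is precisely Izhboldin's theorem, which I would cite rather than reprove. The only other point requiring care is the passage to finitely generated subextensions and the colimit dimension bound, both of which use perfectness of $k$ to guarantee that the $E_\alpha/k$ are separably generated.
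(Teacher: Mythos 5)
Your proof is correct and follows essentially the same route as the paper: Izhboldin's theorem for the absence of $p$-torsion, the $d\log$ (Bloch--Kato--Gabber) injection $K^M_m(E)/p\hookrightarrow\Omega^m_E$, and the vanishing of $\Omega^m_E$ for $m>t$ via perfectness of $k$. The only difference is that you spell out the bound $\dim_E\Omega^1_E\le t$ by a filtered-colimit reduction to separably generated subextensions, a detail the paper simply asserts.
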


\begin{proof}
For any field $E$ of characteristic~$p$, the group $K^M_m(E)$ 
has no $p$-torsion by Izhboldin's Theorem (\cite[III.7.8]{WK}), 
and the $d\log$ map $K^M_m(E)/p\rto\Omega^m_E$
is an injection with image $\nu(m)$; see \cite[III.7.7.2]{WK}.
Since $k$ is perfect, $\Omega^1_k=0$ and $\Omega^1_E$ is
$t$--dimensional, so if $m>t$ then $\Omega^m_E=0$ 
and hence $K^M_m(E)/p=0$.
\end{proof}

\begin{exam}\label{ex:N=norm}
(i) $H_{-i,-i}(\Spec E)=K^M_i(E)$ for every field $E$ over $k$,
as is evident from the presentation in Lemma \ref{H-n-n}. 
\\
(ii) If $E$ is a finite extension of $k$, the proper pushforward 
from $K^M_i(E)=H_{-i,-i}(\Spec E)$ to $K^M_i(k)=H_{-i,-i}(\Spec k)$
is just the norm map $N_{E/k}$; see \cite[III.7.5.3]{WK}. \\ 
(iii) If $\pi:X\rto\Spec(k)$ is proper, and $x\in X$ is closed,
the restriction of the pushforward 
\[
\pi_*:H_{-i,-i}(X)\rto H_{-i,-i}(\Spec k)=K^M_i(k)
\]
to $K^M_i(k(x))$ sends  $[x,\alpha]$ to the norm $N_{k(x)/k}(\alpha)$.
This follows from (ii) by functoriality of $H_{-i,-i}$ for the composite 
$\Spec k(x)\rto X \rto \Spec k$, $x\in X$ closed. 
From the presentation in Lemma \ref{H-n-n}, the map $N_{X/k}$ is
completely determined by the formula $\pi_*[x,\alpha]=N_{k(x)/k}(\alpha)$.

In particular, the image of $\pi_*$ is the subgroup of
$K^M_i(k)$ generated by the norms $N_{k(x)/k}(\alpha)$ of
$\alpha\in k(x)^\times$ as $x$ ranges over the closed points of $X$.
\end{exam}

\begin{lem}\label{lem:H90}
Suppose that $\mu_\ell\subset k$ and $a\in k^\times$, and set
$E=k(\root\ell\of{a})$, $X=\Spec(E)$. Then 
$\bar H_{-i,-i}(X)\cong K^M_i(E)_{\Gal(E/k)}$, and
$\bar H_{-i,-i}(X)\rto K^M_i(k)$ is an injection.
\end{lem}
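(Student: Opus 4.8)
The plan is to compute the coequalizer $\bar H_{-i,-i}(X)$ by hand using Galois descent, then to identify the resulting map to $K^M_i(k)$ as a norm and invoke Hilbert's Theorem~90 for Milnor $K$-theory. I may assume $a\notin k^{\times\ell}$, since otherwise $E=k$, $X=\Spec k$ and both assertions are trivial; thus $E/k$ is cyclic Galois with group $G=\Gal(E/k)\cong\Z/\ell$, say $G=\langle\sigma\rangle$. Because $X=\Spec E$, we have $X\times X=\Spec(E\otimes_k E)$, and the standard descent isomorphism $E\otimes_k E\cong\prod_{g\in G}E$ exhibits $X\times X$ as the finite disjoint union $\coprod_{g\in G}\Spec E$. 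By additivity of motivic homology over finite coproducts and Example~\ref{ex:N=norm}(i), this identifies $H_{-i,-i}(X)$ with $K^M_i(E)$ and $H_{-i,-i}(X\times X)$ with $\bigoplus_{g\in G}K^M_i(E)$.

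Next I would trace the two projections through this identification. On the $g$-th copy of $\Spec E$, the first projection restricts to the identity while the second restricts to the automorphism $\Spec(g)$; since pushforward along a finite map is the norm (Example~\ref{ex:N=norm}(ii)--(iii)), $p_{1*}$ becomes the sum $(\alpha_g)_g\mapsto\sum_g\alpha_g$ and $p_{2*}$ becomes the twisted sum $(\alpha_g)_g\mapsto\sum_g g\cdot\alpha_g$. Hence the image of $p_{1*}-p_{2*}$ is the subgroup generated by $\alpha-g\cdot\alpha$ over $g\in G$ and $\alpha\in K^M_i(E)$ (the precise convention $g$ versus $g^{-1}$ is immaterial here), which is the augmentation submodule $(\sigma-1)K^M_i(E)$. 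Therefore the coequalizer is the group of coinvariants, $\bar H_{-i,-i}(X)\cong K^M_i(E)_G$, proving the first assertion. Since $\pi\circ p_1=\pi\circ p_2$ as maps $X\times X\to\Spec k$, the pushforward $\pi_*$ coequalizes $p_{1*}$ and $p_{2*}$ and so descends to the coinvariants; by Example~\ref{ex:N=norm}(ii) the descended map $\bar H_{-i,-i}(X)\to K^M_i(k)$ is the norm $\bar N\colon K^M_i(E)_G\to K^M_i(k)$, well defined precisely because $N_{E/k}$ is $G$-invariant.

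It remains to show $\bar N$ is injective, and this is the main obstacle. Since $(\sigma-1)K^M_i(E)\subseteq\ker N_{E/k}$ always holds, injectivity of $\bar N$ is equivalent to the reverse inclusion $\ker N_{E/k}=(\sigma-1)K^M_i(E)$ --- that is, to the vanishing of the Tate cohomology group $\widehat H^{-1}(G,K^M_i(E))$, which is Hilbert's Theorem~90 for the Milnor $K$-theory of the cyclic extension $E/k$. For $i=1$ this is the classical Hilbert~90 for $E^\times$, but for general $i$ it is not elementary; I would deduce it from Voevodsky's Galois computations \cite[3.2 and 3.6]{HW} (\cf \cite[5.2 and 6.11]{mc/2}), which is exactly the input the introduction flags for the exact sequence \eqref{eq:n=1}.
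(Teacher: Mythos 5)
Your proposal is correct and takes essentially the same route as the paper: the paper's own proof likewise uses the Galois splitting $X\times X\cong\coprod_{G}X$ together with Example~\ref{ex:N=norm}(i) to identify $\bar H_{-i,-i}(X)$ with the coinvariants $(K^M_iE)_G$, and then obtains injectivity of the norm on coinvariants from the exact sequence \eqref{eq:n=1}, whose initial injection rests on exactly the Voevodsky references \cite[3.2 and 3.6]{HW} (\cf \cite[5.2 and 6.11]{mc/2}) that you invoke. Your write-up simply unwinds the coequalizer computation and the Hilbert~90 input in more detail than the paper's two-line citation.
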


\begin{proof}
Note that $E/k$ is Galois with group $G$, so $X\times X\cong\prod_GX$
and $\bar H_{-i,-i}(X)\cong (K^M_iE)_G$ by Example \ref{ex:N=norm}(i).
In this case, $(K^M_iE)_G$ is a subgroup of $K^M_i(k)$ by
\eqref{eq:n=1}. 
\end{proof}

\begin{exam}\label{ex:notgalois}
If $E/k$ is not Galois, $\bar H_{-i,-i}(\Spec(E))\rto K^M_i(k)$ 
need not be an injection, even for $n=1$.  One way to think of this is to 
realize that the classical Hilbert 90 asserts exactness of 
$(E\oo E)^\times \rightrightarrows E^\times \rto k^\times$, 
and Hilbert 90 requires $E/k$ to be Galois.  A concrete example 
is given by $\ell=3$, $k=\Q$, and $E=\Q(\root3\of2)$. In this
case, $\Spec(E)\times\Spec(E)\cong \Spec(E\times F)$, where 
$F=E(\root3\of1)$, and the coequalizer $\bar H_{-1,-1}(\Spec(E))$ 
of $(E\times F)^\times\rightrightarrows E^\times$ does not inject 
into $\Q^\times$.  This shows that $\pi_*$ in 
Theorem~\ref{thm:2->l}(a) is not always an injection.
\end{exam}

\medskip
\section{Norm varieties} \label{sec:prelim}

Let $\a=(a_1,\dotsc,a_n)$ be a sequence of units in a field $k$
of characteristic not equal to $\ell$.

\begin{defn} \label{def:splitnorm}
A field $F$ over $k$ is said to be a {\it splitting field} for $\a$
if $\a$ vanishes in $K_n^M(F)/\ell$.
We say that a variety $X$ is a {\it splitting variety} for $\a$
if $k(X)$ is a splitting field for $\a$, i.e.,
if $\a$ vanishes in $K_n^M(k(X))/\ell$. 

Let $X$ be a splitting variety for $\a$. We say that $X$ is 
an {\it $\ell$-generic} splitting variety for $\a$ if 
any splitting field $F$ has a finite extension $E$ of degree
prime to $\ell$ with $X(E)\ne\emptyset$.

A {\it norm variety} for $\a$ is a smooth projective variety $X$ 
of dimension $d=\ell^{n-1}-1$ 
which is an $\ell$-generic splitting variety for $\a$.
When $\textrm{char}(k)=0$, a norm variety for $\a$ always exists
(see \cite[10.16]{HW}).
\end{defn}

For example, $E=k(\root\ell\of a_1)$ is a splitting field for
$\a=(a_1,...,a_n)$. Since a norm variety $X$ is
$\ell$-generic, there is a finite field extension $E'/E$
of degree prime to $\ell$ and an $E'$-point of $X$.
The following result, due to Rost, is proven in
Chapter 10 of \cite{HW}.

\begin{thm}
If $\a$ is a nonzero symbol over $k$ and $\textrm{char}(k)=0$, 
then there exists a norm variety $X$ for $\a$ having a closed point
$x$ with $[k(x):k]=\ell$.
\end{thm}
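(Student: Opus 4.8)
The plan is to prove the existence of a norm variety with a degree-$\ell$ point by reduction to Rost's inductive construction, rather than by extracting such a point from an arbitrary norm variety. First I would dispose of the easy half: \emph{every} closed point $x$ of any splitting variety $X$ for the nonzero symbol $\a$ satisfies $\ell \mid [k(x):k]$. Indeed, the closed point $x$ together with the identity on $\Spec k(x)$ gives a $k(x)$-rational point of $X \times_k k(x)$, so $X \times_k k(x)$ is a splitting variety over $k(x)$ with a rational point; specializing the vanishing class at this smooth rational point shows $\a$ vanishes in $K^M_n(k(x))/\ell$. If $[k(x):k]$ were prime to $\ell$, the projection formula $N_{k(x)/k}(\a_{k(x)}) = [k(x):k]\cdot\a$ would then force $\a = 0$ in $K^M_n(k)/\ell$, contrary to hypothesis. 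Thus the problem is only to produce a point whose degree is exactly $\ell$ rather than a larger multiple of $\ell$.

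Second, as motivation and a partial result, I would apply $\ell$-genericity to the concrete splitting field $E = k(\root\ell\of{a_1})$. Since $\a \neq 0$ forces $a_1 \notin k^{\times\ell}$, the polynomial $T^\ell - a_1$ is irreducible and $[E:k] = \ell$; by $\ell$-genericity there is a finite extension $E'/E$ of degree prime to $\ell$ with $X(E') \neq \emptyset$, which yields a closed point of degree $\ell m$ with $\gcd(m,\ell) = 1$. Together with the previous paragraph this pins down the $\ell$-adic valuation of the index of $X$, but it does not produce a single point of degree exactly $\ell$, since a degree-$\ell m$ extension need not contain a subfield of degree $\ell$ over $k$.

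The crux is therefore the construction itself, which I would take from \cite[Ch.~10]{HW} and run by induction on $n$. For $n = 1$ the variety $X = \Spec k(\root\ell\of{a_1})$ is a $0$-dimensional norm variety that \emph{is} a single closed point of degree $\ell$, so the base case is immediate. For the inductive step I would invoke Rost's norm construction (\cite[10.16, 10.21]{HW}), which builds a norm variety for a length-$n$ symbol out of a norm variety for a length-$(n-1)$ symbol, and verify that a degree-$\ell$ point is inherited through the construction. The main obstacle is exactly this last verification: controlling zero-cycles of small degree through Rost's symmetric-power/norm construction, where the characteristic-number input $s_d(X) \not\equiv 0 \bmod \ell^2$ together with Rost's degree formula is essential, and where the characteristic-$0$ hypothesis is used. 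This is the genuinely deep ingredient; everything else is formal.
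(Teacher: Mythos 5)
Your proposal takes essentially the same route as the paper: the paper offers no argument of its own for this theorem, deferring it entirely to Rost's inductive construction in Chapter 10 of \cite{HW} (cf.\ \cite[1.21]{SJ} together with the Norm Principle of \cite[0.3]{HW-Rost}), which is exactly where you place the crux. Your preliminary observations --- that $\ell$ divides the degree of every closed point of a smooth splitting variety for a nonzero $\a$, and that $\ell$-genericity applied to $k(\root\ell\of{a_1})$ only yields a point of degree divisible by $\ell$ but not necessarily equal to $\ell$ --- are correct and match the paper's own framing surrounding the statement.
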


We will frequently use the following fact, proven in 
\cite[1.21]{SJ} (see \cite[10.13]{HW}):
if $k$ has characteristic~0 and $n\ge2$,
the symmetric characteristic class $s_d(X)$ of a norm variety $X$ 
is nonzero modulo $\ell^2$
(i.e., $X$ is a {\it $\nu_{n-1}$-variety}).

\begin{defn}
  Given a norm variety $X$, let $\grX$ denote its 0-coskeleton, i.e., the
  simplicial scheme $p\mapsto X^{p+1}$ with the projections $X^{p+1}\rto X^p$ as
  face maps and the diagonal inclusions as degeneracies.
\end{defn}

For simplicity, we write  $\LL$ for $\Z_{(\ell)}(1)[2]$
and $\Rtr(\X)$ for $\Z_{(\ell)\;\tr}(\X)$, and regard $X$ as a Chow motive.
Recall \cite[20.1]{mvw} that Chow motives form a full subcategory of $\DM$.


\begin{thm}\label{thm:M}
Let $X$ be a norm variety for $\a$ 
such that $s_d(X)$ is nonzero modulo $\ell^2$.
Then there is a Chow motive $M=(X,e)$ with coefficients $\Z_{(\ell)}$,
such that 
\begin{enumerate}
\item[(i)]\enspace
$M=(X,e)$ is a symmetric Chow motive, i.e., $(X,e)=(X,e^t)$;
\item[(ii)]\enspace 
The projection $X\rto \Z_{(\ell)}$ factors as $X\rto(X,e)\rto \Z_{(\ell)}$, 
i.e., is zero on $(X,1-e)$;

\item[(iii)]\enspace
There is a motive $D$ related to the structure map $y:M\rto\Rtr(\grX)$ and
its twisted dual $y^D$ by two distinguished triangles in $\DM$,
where $b=d/(\ell-1)$:
\addtocounter{equation}{-1}
\begin{subequations}
\begin{align}
D\otimes\LL^b\ \rto\ &M\,\map{y}\,\Rtr(\grX)\map{s} D\oo\LL^b[1]\ , 
\label{3.4.1}\\
\Rtr(\grX)\oo\LL^d \map{y^D} &M\map{u} D\map{r} \Rtr(\grX)\oo\LL^d[1].
\label{3.4.2}
\end{align}
\end{subequations}
\end{enumerate}
\end{thm}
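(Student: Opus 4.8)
The plan is to produce the idempotent $e$ cutting $M$ out of the motive of $X$, to read off (i) and (ii) as features of that construction, and then to build the motive $D$ and the two triangles of (iii), obtaining \eqref{3.4.2} from \eqref{3.4.1} by Poincar\'e duality. Throughout I work with Chow motives with $\Z_{(\ell)}$-coefficients inside $\DM$.

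First I would construct the symmetric idempotent $e\in\mathrm{CH}^d(X\times X)_{(\ell)}$. The hypothesis that $s_d(X)$ is nonzero modulo $\ell^2$ says that $X$ is a $\nu_{n-1}$-variety, and this is exactly the divisibility input needed for Rost's special correspondence; following \cite{SJ} and \cite[Ch.\,10]{HW} one obtains an idempotent whose summand $M=(X,e)$ is the generalized Rost motive, characterized by the property that over any splitting field of $\a$ on which $X$ acquires a rational point, $M$ becomes $\bigoplus_{i=0}^{\ell-1}\LL^{ib}$ with $b=d/(\ell-1)$. Because the special correspondence is symmetric one gets $(X,e)=(X,e^t)$, which is (i); and (ii) holds because the bottom Tate summand $\LL^0=\Z_{(\ell)}$ lies in $M$, so the structure map $X\to\Z_{(\ell)}$ factors through $M$ and kills $(X,1-e)$.

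For (iii), I would take $y\colon M\to\Rtr(\grX)$ to be the restriction to $M$ of the canonical map $\Rtr(X)\to\Rtr(\grX)$ to the $0$-coskeleton, and define $D\oo\LL^b$ as its fiber; this is \eqref{3.4.1} by construction, and one checks that the fiber really is a twist of a motive by testing over a splitting field, where $\grX$ is contractible, $\Rtr(\grX)=\LL^0$, the map $y$ is projection onto the bottom cell, and the fiber is $\bigoplus_{i=1}^{\ell-1}\LL^{ib}=D\oo\LL^b$ with $D=\bigoplus_{j=0}^{\ell-2}\LL^{jb}$. The second triangle \eqref{3.4.2} should come from the self-duality of $M$: since $X$ is smooth projective of dimension $d$ and $e=e^t$, Poincar\'e duality for Chow motives gives $M^\vee\cong M\oo\LL^{-d}$. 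Dualizing \eqref{3.4.1} and twisting by $\LL^d$ then yields a triangle with $M$ in the middle whose outer terms are the twisted duals of $\Rtr(\grX)$ and of $D\oo\LL^b$; using the coalgebra and augmentation structure on $\Rtr(\grX)$ (from the projections $\grX\times\grX\to\grX$ and $\grX\to\Spec k$) to identify the first with $\Rtr(\grX)\oo\LL^d$, and the self-duality of $D$ up to $\LL^{(\ell-2)b}$ to identify the second with $D$, one recovers \eqref{3.4.2} with $y^D$ the resulting twisted dual of $y$.

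I expect the main obstacle to be the construction of the symmetric idempotent $e$: already the existence of an idempotent rests on the mod-$\ell^2$ nonvanishing of $s_d(X)$ through Rost's degree formula and a descent argument, and arranging symmetry needs the full special-correspondence machinery; this is where characteristic $0$ is essential, since it is what guarantees $s_d(X)\not\equiv0\pmod{\ell^2}$. The secondary difficulty is the duality bookkeeping in (iii): because $\Rtr(\grX)$ is not self-dual on the nose, passing from \eqref{3.4.1} to \eqref{3.4.2} requires combining its augmentation structure with the self-duality of $M$ to verify that the same motive $D$ occurs as the fiber in one triangle and as the cofiber in the other.
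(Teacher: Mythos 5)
Your outline for (i) and (ii) is consistent with what is known: a symmetric idempotent $e$ with $M=(X,e)$ the generalized Rost motive does exist under the hypothesis $s_d(X)\not\equiv0\pmod{\ell^2}$ (the paper simply cites \cite[Ch.\,5]{HW}, where $M$ arises as a symmetric power $S^{\ell-1}(A)$ of a motive $A$ attached to $\a$, rather than directly from Rost's special correspondence). The genuine gap is in your derivation of \eqref{3.4.2} from \eqref{3.4.1} by Poincar\'e duality. Duality applies to rigid objects, and of the three objects in \eqref{3.4.1} only $M$ is a Chow motive: $\Rtr(\grX)$ is the motive of a simplicial scheme and $D$ is built from cones of such motives, and neither is known to be dualizable in $\DM$. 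One can certainly \emph{define} a twisted dual $y^D\colon \Rtr(\grX)\oo\LL^d\to M$ by composing $\underline{\Hom}(y,\LL^d)$ with the canonical map $\Rtr(\grX)\oo\LL^d\to\underline{\Hom}(\Rtr(\grX),\LL^d)$ coming from the multiplication and augmentation of $\Rtr(\grX)$, but since that canonical map is not an isomorphism, the cofiber of $y^D$ is \emph{not} identified with the twisted dual of the fiber of $y$. The ``self-duality of $D$ up to $\LL^{(\ell-2)b}$'' that you invoke to close the argument is therefore not available; it is essentially equivalent to what needs to be proved, namely that one and the same motive $D$ occurs in both triangles.

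A second concrete failure point is your verification that $\mathrm{fib}(y)$ is a twist ``by testing over a splitting field.'' Base change to a splitting field is not conservative on this subcategory: $\Rtr(\tilde\grX)$ becomes zero over any splitting field of $\a$ yet is nonzero over $k$ (its nonvanishing cohomology is what drives the whole paper), so computing the fiber after base change determines nothing about it over $k$. This is why the actual proof proceeds differently: following Voevodsky \cite[pp.\,422--428]{mc/l} and \cite[Ch.\,5]{HW}, one forms $A$ as the fiber of $\mu=Q_{n-2}\cdots Q_0(\delta)\colon \Rtr(\grX)\to\Rtr(\grX)(b)[2b+1]$ and constructs the symmetric powers $S^iA$ inductively; the two triangles \eqref{3.4.1} and \eqref{3.4.2} (with $M=S^{\ell-1}A$, $D=S^{\ell-2}A$) are produced \emph{simultaneously} by this induction (\cite[5.7]{HW}), so the coincidence of $D$ in both is true by construction, and the hypothesis on $s_d(X)$ enters afterwards to show $M$ is a symmetric Chow summand of $X$ (\cite[5.19]{HW}). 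If you want to salvage your route, you would need an independent argument identifying $\mathrm{cofib}(y^D)$ with $\mathrm{fib}(y)\oo\LL^{-b}$, which is exactly the content the symmetric-power construction supplies.
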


\begin{proof}
This is proven carefully in \cite[Ch.\,5]{HW}; the construction
is due to Voevodsky \cite[pp.\,422--428]{mc/l} 
and appears in Section\,1 of \cite{W-Top}. Specifically,
$\a$ determines a motive $A$ by (5.1), Definition 5.5 and 5.13.1 
of \cite{HW}; by definition, $M=S^{\ell-1}(A)$ and $D=S^{\ell-2}(A)$. 
Part (i) follows from 5.19; part (ii) follows 
from 5.9; and part (iii) follows from 5.7 of {\it loc.\,cit.}
\end{proof}


Although many of our techniques require the field $k$ to 
contain the $\ell$-th roots of unity, we can sometimes remove 
this restriction using the following observation.  Given a
norm variety $X$ over a field $k$, let $k_1$ denote the largest
subfield of $k(\zeta)$ contained in $k(X)$. Then
$X$ is also a norm variety for $\a$ over $k_1$.

\begin{lem}\label{basechange}
Given a nonzero symbol $\a\in K^M_*(k)/\ell$, let $X$ be a norm
variety for $\a$ over $k$. Then every component $X'$ of $X_{k(\zeta)}$
is a norm variety for $\a$ over $k(\zeta)$.
\end{lem}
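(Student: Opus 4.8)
The plan is to verify, for each component $X'$ of $X_{k(\zeta)}$, the three conditions of Definition~\ref{def:splitnorm}: that $X'$ is smooth projective of dimension $d=\ell^{n-1}-1$, that it is a splitting variety for $\a$ over $k(\zeta)$, and that it is $\ell$-generic. The first two are formal. Since $k(\zeta)/k$ is finite separable, $X_{k(\zeta)}=X\times_k k(\zeta)$ is smooth and projective over $k(\zeta)$, so each component $X'$, being open and closed in a smooth scheme, is smooth, projective and irreducible; and as $X_{k(\zeta)}\to X$ is finite and surjective, $X'$ dominates $X$ and hence has dimension $d$. For the splitting property, the projection $X'\to X$ is dominant and induces an inclusion $k(X)\hookrightarrow k(\zeta)(X')$ of function fields, through which the structure map $k\to k(\zeta)(X')$ factors; since $\a$ already vanishes in $K^M_n(k(X))/\ell$, it vanishes in $K^M_n(k(\zeta)(X'))/\ell$, so $X'$ splits $\a$ over $k(\zeta)$.

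The real content is $\ell$-genericity, and I expect controlling \emph{which} component a splitting point lands on to be the main obstacle. The basic arithmetic input is that $e=[k(\zeta):k]$ divides $\ell-1$ and is therefore prime to $\ell$; in particular the restriction $K^M_n(k)/\ell\to K^M_n(k(\zeta))/\ell$ is injective (its composite with the norm is multiplication by the unit $e$), so $\a$ remains a nontrivial symbol over $k(\zeta)$. Now a splitting field $F$ for $\a$ over $k(\zeta)$ is a fortiori a splitting field over $k$, so the $\ell$-genericity of $X$ over $k$ produces a prime-to-$\ell$ extension $E/F$ with $X(E)\ne\emptyset$; as $E\supseteq k(\zeta)$ this says $X_{k(\zeta)}(E)\ne\emptyset$. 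The trouble is that the resulting point lies on \emph{some} component of $X_{k(\zeta)}$, not on a prescribed $X'$, so the argument does not immediately localize.

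To resolve this I would use the preceding observation that $X$ is a norm variety for $\a$ over $k_1=k(\zeta)\cap k(X)$, together with the Galois action of $G=\Gal(k(\zeta)/k)$. Writing $k'$ for the field of constants of $X$ (the algebraic closure of $k$ in $k(X)$), one has $k_1=k'\cap k(\zeta)$, and $k(\zeta)/k_1$ is Galois; hence $k'$ and $k(\zeta)$ are linearly disjoint over $k_1$, so $k'\otimes_{k_1}k(\zeta)$ is a field and the base change $X\times_{k_1}k(\zeta)$ is \emph{irreducible}. This distinguished component is a norm variety over $k(\zeta)$: it is smooth projective of dimension $d$ and splits $\a$ as above, and its $\ell$-genericity follows by pulling back along $k(\zeta)/k_1$, since any splitting field $F$ over $k(\zeta)$ is a splitting field over $k_1$ and a prime-to-$\ell$ point furnished by the $\ell$-genericity of $X$ over $k_1$ is exactly a point of $X\times_{k_1}k(\zeta)$. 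Finally the decomposition $k_1\otimes_k k(\zeta)\cong k(\zeta)^{[k_1:k]}$ exhibits $X_{k(\zeta)}$ as the disjoint union of the $G$-translates of this one irreducible component, so $G$ permutes the components transitively; and since $\a$ is defined over $k$ it is fixed by $G$, whence being a norm variety for $\a$ is stable under the $G$-twist (twisting carries splitting fields to splitting fields and rational points to rational points, preserving degrees). Thus every component is a norm variety. The crux is precisely this localization step — recognizing $X_{k(\zeta)}$ as Galois-conjugate irreducible pieces, each a prime-to-$\ell$ base change of the norm variety $X/k_1$ — while the smoothness, projectivity, dimension and splitting conditions are routine.
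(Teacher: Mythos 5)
Your proof is correct, but it takes a genuinely different route from the paper's. The paper argues directly on an arbitrary component and never names a distinguished one: given a splitting field $F$ of $\a$ over $k(\zeta)$, $\ell$-genericity of $X$ over $k$ alone yields a prime-to-$\ell$ extension $E/F$ (so $k(\zeta)\subset E$) together with a $k$-map $\Spec E\to X$; base-changing gives $\Spec(E\otimes_k k(\zeta))\to X_{k(\zeta)}$ over $k(\zeta)$, and since $E\otimes_k k(\zeta)$ is a $\Gal(k(\zeta)/k)$-indexed product of copies of $E$ while $\Gal(k(\zeta)/k)$ permutes the components of $X_{k(\zeta)}$ transitively, every component acquires an $E$-point simultaneously. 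That one tensor-product trick dissolves exactly the localization problem you correctly flagged as the crux. You instead pin the components down explicitly: linear disjointness of the field of constants $k'$ and $k(\zeta)$ over $k_1$ shows $X\times_{k_1}k(\zeta)$ is irreducible, $\ell$-genericity of $X$ over $k_1$ makes that piece a norm variety, and Galois twisting (legitimate because $\a$ is defined over $k$) spreads the property to all components. What your route buys: it proves along the way the decomposition of $X_{k(\zeta)}$ as a $\Gal(k_1/k)$-indexed coproduct of copies of $X'=X\times_{k_1}k(\zeta)$, which the paper only asserts separately in Remark \ref{rem:basechange} and needs later in Proposition \ref{prop:5=>2}; so your argument establishes the lemma and the remark in one pass. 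What it costs: you lean on the observation, stated just before the lemma but not proved in the paper, that $X$ is a norm variety over $k_1$. Verifying that observation faces the same embedding-matching difficulty you are trying to avoid --- a $k$-point $\Spec E\to X$ induces \emph{some} embedding $k_1\to E$, not necessarily the structural one --- and is resolved by the same trick, via $E\otimes_k k_1\cong\prod_{\Gal(k_1/k)}E$. So your proof defers, rather than eliminates, the key difficulty, whereas the paper's version is shorter and self-contained; but both ultimately rest on the transitivity of the Galois action on components together with the fact that $[k(\zeta):k]$ divides $\ell-1$.
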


\begin{proof}
  Clearly, $X'$ is a splitting variety for $\a$ of the right dimension.  Given a
  splitting field $F$ of $\a$ over $k(\zeta)$, there is a prime-to-$\ell$
  extension $E$ of $F$ such that $k(\zeta)\subset E$ and such that there exists
  a map $\Spec\,E\rto X$ over $k$.  By basechange, there is a map
  $\Spec\,E\oo_kk(\zeta)\rto X_{k(\zeta)}$ over $k(\zeta)$.  As $k(\zeta)\subset
  E$, $E\oo_kk(\zeta)$ is a $\Gal(k(\zeta)/k)$-indexed product of copies
  of $E$. Since $\Gal(k(\zeta)/k)$ acts transitively on the components
  of $X_{k(\zeta)}$, each component $X'$ of $X_{k(\zeta)}$ has an $E$-point.
  Thus $X'$ is a norm variety over $k(\zeta)$.
\end{proof}

\begin{rem}\label{rem:basechange}
$X_{k(\zeta)}$ is a $\Gal(k_1/k)$-indexed coproduct of copies of 
$X'=X\times_{k_1}\Spec\,k(\zeta)$.
\end{rem}

\medskip
\section{Reducing to Theorem~\ref{thm:2->l} over fields containing $\ell$-th roots}

We are now ready to prove Theorem~\ref{ovv3.3} assuming
Theorem~\ref{thm:2->l}.  Fix a field $k$ of characteristic~0, 
a symbol $\a$ and a norm variety $X$ for $\a$.  
We first observe that the statement of Theorem~\ref{ovv3.3} 
is equivalent to the exactness of the sequence
\begin{equation}\label{eq:seq=thm}
H_{-i,-i}(X)/\ell \map{\pi_*} K^M_i(k)/\ell \map{\a\cup} 
K^M_{i+n}(k)/\ell \map{\iota} K^M_{i+n}(k(X))/\ell .
\end{equation}
As observed in Example \ref{ex:i=0}, Theorem \ref{ovv3.3}
for $n=1$ follows from \eqref{eq:n=1} when $\mu_\ell\subset k^\times$.  

\begin{prop}
Suppose that Theorem~\ref{thm:2->l} holds over $k$.
Then so does Theorem~\ref{ovv3.3}.
\end{prop}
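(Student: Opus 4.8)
The plan is to deduce from Theorem~\ref{thm:2->l} the exactness of \eqref{eq:seq=thm} for every $i\ge 0$; by the equivalence recorded just above, this gives both parts of Theorem~\ref{ovv3.3}. By Rost's theorem quoted after Definition~\ref{def:splitnorm} there is a norm variety for $\a$ carrying a closed point of degree $\ell$, and a prime-to-$\ell$ transfer argument shows that $\ker\!\big(K^M_*(k)/\ell\to K^M_*(k(X))/\ell\big)$ is independent of the choice of $\ell$-generic splitting variety, so we may assume our $X$ has such a point and hence that Theorem~\ref{thm:2->l} applies. Writing $q=n+i$, there are exactly two places where exactness of \eqref{eq:seq=thm} must be checked: at $K^M_q(k)/\ell$ and at $K^M_i(k)/\ell$.

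First I would treat the term $K^M_q(k)/\ell$, which yields part~(a). The map $\iota$ of \eqref{eq:seq=thm} factors through the equalizer $\widetilde{K}^M_q(k(X))/\ell$: any class pulled back from $k$ has equal images under the two inclusions $k(X)\rightrightarrows k(X\times X)$, since both composites $k\to k(X\times X)$ are the structure map of $X\times X$. Hence the kernel of $\iota$ computed in $K^M_q(k(X))/\ell$ coincides with its kernel computed in $\widetilde{K}^M_q(k(X))/\ell$, and by Theorem~\ref{thm:2->l} (whose term $\widetilde{K}^M_q(k(X))/\ell$ is the same in both cases (a) and (b)) this kernel is the image of $\a\cup$, i.e.\ the degree-$q$ part of the ideal $(\a)$. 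This is precisely Theorem~\ref{ovv3.3}(a).

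The term $K^M_i(k)/\ell$, which gives part~(b), requires reducing the sequences of Theorem~\ref{thm:2->l} modulo $\ell$. The tool is the elementary observation that if $A\xrightarrow{f}B\xrightarrow{g}C$ is exact at $B$ and $\ell C=0$, then $A/\ell\to B/\ell\to C$ is again exact at $B/\ell$: indeed $g$ factors through $B/\ell$ and $\ell B\subseteq\ker g=\mathrm{im}\,f$, so both $\ker\bar g$ and $\mathrm{im}\,\bar f$ equal $\mathrm{im}(f)/\ell B$. When $\mu_\ell\subset k$ I apply this with $C=K^M_q(k)/\ell$ to the exact sequence $\bar H_{-i,-i}(X)\xrightarrow{\pi_*}K^M_i(k)\xrightarrow{\a\cup}K^M_q(k)/\ell$ of Theorem~\ref{thm:2->l}(a), obtaining exactness of $\bar H_{-i,-i}(X)/\ell\to K^M_i(k)/\ell\to K^M_q(k)/\ell$. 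Since $\pi_*$ factors through the canonical surjection of $H_{-i,-i}(X)$ onto the coequalizer $\bar H_{-i,-i}(X)$ (because $\pi\circ\mathrm{pr}_1=\pi\circ\mathrm{pr}_2$ on $X\times X$), the image of $\pi_*$ out of $H_{-i,-i}(X)/\ell$ equals that out of $\bar H_{-i,-i}(X)/\ell$. This establishes exactness of \eqref{eq:seq=thm} at $K^M_i(k)/\ell$, and by Example~\ref{ex:N=norm}(iii) identifies the degree-$i$ part of $\ann(\a)$ with the subgroup generated by norms from closed points of $X$, which is Theorem~\ref{ovv3.3}(b).

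The case $\mu_\ell\not\subset k$ is where the real bookkeeping lies, and I expect it to be the main obstacle. Here Theorem~\ref{thm:2->l}(b) only provides the exact sequence $\bar H_{-i,-i}(X)[e^{-1}]\xrightarrow{\pi_*}K^M_i(k)[e^{-1}]\xrightarrow{\a\cup}K^M_q(k)/\ell$, with $e=[k(\zeta):k]$ inverted. The point that makes this harmless is that $e$ divides $\ell-1$ and is therefore prime to $\ell$, so the localization map induces a natural isomorphism $A[e^{-1}]/\ell\cong A/\ell$ for every abelian group $A$. Applying the reduction lemma above to the $e$-inverted sequence and then this identification again produces exactness of $\bar H_{-i,-i}(X)/\ell\to K^M_i(k)/\ell\to K^M_q(k)/\ell$, after which the passage from $\bar H$ to $H$ and the norm description proceed exactly as in the previous paragraph. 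The only care needed is to verify that these identifications are compatible with $\pi_*$ and with $\a\cup$, which holds because every map in sight is natural in $A$.
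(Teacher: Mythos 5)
Your main argument is correct and is essentially the paper's own proof with the details written out: the paper's entire proof consists of observing that the equalizer $\widetilde{K}^M_{i+n}(k(X))/\ell$ is a subgroup of $K^M_{i+n}(k(X))/\ell$, so that Theorem~\ref{thm:2->l} yields an exact sequence ending in $K^M_{i+n}(k(X))/\ell$, and then declaring exactness of \eqref{eq:seq=thm} ``immediate.'' Your reduction lemma (exactness survives reduction mod $\ell$ on the left when $\ell C=0$), the surjectivity of $H_{-i,-i}(X)\to\bar H_{-i,-i}(X)$, and the observation that $e\mid \ell-1$ forces $A[e^{-1}]/\ell\cong A/\ell$ are exactly the bookkeeping hidden in that word, and all three verifications are sound.

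The one step to push back on is your opening reduction, which the paper does not make. Even granting kernel-independence across $\ell$-generic splitting varieties, that only licenses changing $X$ for part~(a): part~(b) of Theorem~\ref{ovv3.3} is a statement about norms from closed points of the \emph{given} $X$, and replacing $X$ by a Rost norm variety carrying a degree-$\ell$ point changes that statement, so your ``we may assume'' does not deliver (b) as stated. (Moreover, ``a prime-to-$\ell$ transfer argument'' is not quite self-contained even for the kernel: an $E$-point of $X$ only gives an embedding $k(x)\subseteq E$ for some point $x$ of $X$, so one needs the Gersten injectivity $H^0(X,\H^q)\hookrightarrow K^M_q(k(X))/\ell$ of Remark~\ref{rem:11.1} to know that classes killed by $k(X)$ also die in $k(x)$.) Two repairs are available: either also prove norm-subgroup independence --- $\a$ vanishes at every closed point of the smooth splitting variety $X$ by the same Gersten argument, so norms from $X$ annihilate $\a$ by the projection formula, while any norm from a closed point $x'$ of the auxiliary variety agrees, up to a factor prime to $\ell$ and hence invertible in $K^M_*(k)/\ell$, with a norm from a closed point of $X$ via an $E$-point supplied by $\ell$-genericity --- or simply read the Proposition as the paper does, taking the conclusion of Theorem~\ref{thm:2->l} as available for the given $X$, which makes your first paragraph unnecessary. (Indeed the mod-$\ell$ statement never truly needs the degree-$\ell$ point: in Proposition~\ref{prop:left-seq} that hypothesis serves only to upgrade $\Z_{(\ell)}$-coefficients to $\Z$, and $A_{(\ell)}/\ell\cong A/\ell$.)
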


\begin{proof}
As the equalizer $\tilde K^M_{i+n}(k(X))/\ell$ is a subgroup of
$K^M_{i+n}(k(X))/\ell$, 
Theorem \ref{thm:2->l} implies that there is an exact sequence
\[
H_{-i,-i}(X)[e^{-1}] \map{\pi_*} K^M_i(k)[e^{-1}] \map{\a\cup} 
K^M_{i+n}(k)/\ell \map{\iota} K^M_{i+n}(k(X))/\ell .
\]
(If $\mu\subset k^\times$ then $e=1$).
Exactness of \eqref{eq:seq=thm} is immediate.
\end{proof}

Thus we have reduced the proof of Theorem~\ref{ovv3.3} to
Theorem~\ref{thm:2->l}.  We will now show that proving 
Theorem~\ref{thm:2->l} over fields containing $\ell$-th roots of unity
suffices. 

\begin{prop}\label{prop:5=>2} 
  Suppose that Theorem~\ref{thm:2->l} holds for all fields of
  characteristic $0$ which contain $\ell$-th roots of unity.  Then
  Theorem~\ref{thm:2->l} holds for all fields of characteristic $0$.
\end{prop}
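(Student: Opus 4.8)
The plan is to deduce part (b) over an arbitrary characteristic-$0$ field $k$ from part (a) applied over $K=k(\zeta)$, exploiting that $e=[K:k]$ is prime to $\ell$ (it divides $\ell-1$). This makes restriction--corestriction available everywhere: $K^M_*(-)/\ell$ is killed by $\ell$ and hence is a $\Z/\ell$-vector space on which $e$ acts invertibly, and after inverting $e$ the integral terms become $\Z[1/e]$-modules. First I would pick a component $X'$ of $X_K$; by Lemma~\ref{basechange} it is a norm variety for $\a$ over $K$, and since $\mu_\ell\subset K$ the hypothesis furnishes the exact sequence of Theorem~\ref{thm:2->l}(a) for $X'/K$.

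By Remark~\ref{rem:basechange} the field of constants of $X$ is $k_1=k(\zeta)\cap k(X)$, so $X'$ is the base change to $K$ of the geometrically connected $k_1$-variety $X$, and $G=\Gal(K/k_1)$ acts on the entire sequence by naturality. Since $|G|$ divides $e$, tensoring with $\Z[1/e]$ and then taking $G$-invariants is exact, so I would apply this combined functor and identify the resulting groups with their $k_1$-analogues by Galois descent: $(K^M_*(K)/\ell)^G=K^M_*(k_1)/\ell$ and $(K^M_*(K)[e^{-1}])^G=K^M_*(k_1)[e^{-1}]$; $(\widetilde K^M_q(K(X'))/\ell)^G=\widetilde K^M_q(k(X))/\ell$, the equalizer now taken over $X\times_{k_1}X$, using that $K(X')=k(X)(\zeta)$ is Galois over $k(X)=k_1(X)$ with group $G$; and $\bar H_{-i,-i}(X')^G[e^{-1}]=\bar H_{-i,-i}(X)[e^{-1}]$ by the corresponding descent for Borel--Moore homology. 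This is exactly Theorem~\ref{thm:2->l}(b) over $k_1$, with last term already in the stated form $H^{q+1,q-1}(\X',\Z/\ell)^G$.

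It then remains to pass from $k_1$ down to $k$. The extension $k_1/k$ is Galois of degree prime to $\ell$, being a subextension of the abelian extension $K/k$, and $k_1(X)=k(X)$, so the two right-hand terms are literally unchanged and only the coefficients and the passage from $X\times_{k_1}X$ to $X\times_kX$ move. Writing $\iota_k=\iota_{k_1}\circ\mathrm{res}$, exactness at $K^M_i(k)[e^{-1}]$ and at $K^M_q(k)/\ell$ follows spot-by-spot from the transfer argument: given a class killed by $\iota_k$, its restriction is killed by $\iota_{k_1}$, hence is an image of $\a\cup(-)$ over $k_1$, and applying $N_{k_1/k}$ with the projection formula $N(\a\cup y)=\a\cup N(y)$ (valid since $\a$ is defined over $k$) together with $N\circ\mathrm{res}=[k_1:k]$, a unit mod $\ell$, brings it back to an image of $\a\cup(-)$ over $k$. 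For exactness at $\widetilde K^M_q(k(X))/\ell$ I would compare the two coskeletons: the extra components of $X\times_kX=\coprod_{\Gal(k_1/k)}X\times_{k_1}X$ impose on a class $\iota_{k_1}(w)$ exactly the condition that $w$ be $\Gal(k_1/k)$-invariant, and such $w$ are the image of $\mathrm{res}$ by descent, so $\mathrm{im}\,\iota_{k_1}\cap\widetilde K^M_q(k(X))/\ell=\mathrm{im}\,\iota_k$.

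The main obstacle I anticipate is entirely the fact that $X$ need not be geometrically connected over $k$: its field of constants $k_1=k(\zeta)\cap k(X)$ is what forces the two-stage descent $K\to k_1\to k$, what makes the final term $H^{q+1,q-1}(\X',\Z/\ell)^G$ and the equalizer refer to $k_1$ rather than $k$, and what makes the comparison of the coskeletons $X\times_{k_1}X$ and $X\times_kX$ the one genuinely geometric point. Keeping the localizations $[e^{-1}]$ and the chain of restriction, corestriction, and invariance identifications consistent across both stages is where the care is needed; everything else is formal once the prime-to-$\ell$ degree is in hand.
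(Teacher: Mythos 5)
You have the right mechanism---everything is powered by the fact that $e=[k(\zeta):k]$ is prime to $\ell$, so transfers are available---but your organization genuinely differs from the paper's. The paper never descends the exact sequence to $k_1$ or identifies $G$-invariants: it builds the last map of the $k$-sequence directly as the composite $\widetilde K^M_q(k(X))/\ell \xrightarrow{j} (\widetilde K^M_q(k'(X'))/\ell)^G \xrightarrow{\partial} H^{q+1,q-1}(\X')^G$, places the $k$-sequence over the $e$-inverted $k'$-sequence in a commutative ladder with base-change maps downward and norm maps upward on the first three columns, and concludes by a diagram chase using that norm followed by restriction is multiplication by $e$ together with the projection formula. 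Your two-stage descent ($k(\zeta)\to k_1$ by applying the exact functor ``invert $e$, then take $G$-invariants'' termwise, then $k_1\to k$ by transfers) is viable and arguably more transparent: it explains \emph{why} the fourth and fifth terms of Theorem~\ref{thm:2->l}(b) refer to $k_1$-objects ($X\times_{k_1}X$, $\X'$, $G$-invariants), whereas the paper's ladder makes that visible only through the asymmetry of its fourth column (no upward norm there; compare the paper's remark that $\widetilde N\,j=[k':k_1]$, not $e$). The price of your route is that you must verify several Galois-descent identifications ($(K^M_*(k')/\ell)^G\cong K^M_*(k_1)/\ell$, the analogues for $[e^{-1}]$-coefficients, for $\bar H_{-i,-i}$, and for the equalizers), each of which needs injectivity of restriction along prime-to-$\ell$ extensions; these are all correct but not free.

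There is, however, one step whose justification as written would fail: your claim that the extra components of $X\times_kX$ impose on $\iota_{k_1}(v)$ ``exactly the condition that $v$ be $\Gal(k_1/k)$-invariant.'' First, the components are the \emph{twisted} products $X\times_{k_1}{}^{g}X$ (the second factor's $k_1$-structure twisted by $g$), not literal copies of $X\times_{k_1}X$. More seriously, the equalizer condition on the $g$-component only says that $\mathrm{res}(v)=\mathrm{res}(gv)$ in $K^M_q$ of the function field of $X\times_{k_1}{}^{g}X$, and restriction from $K^M_q(k_1)/\ell$ into that field is far from injective (it kills $\a$, for instance), so you cannot conclude $v=gv$ in $K^M_q(k_1)/\ell$. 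The repair: you only need $\iota_{k_1}(v)=\iota_{k_1}(gv)$ in $K^M_q(k(X))/\ell$, and the restriction $K^M_q(k(X))/\ell\to K^M_q\bigl(k(X\times_{k_1}{}^{g}X)\bigr)/\ell$ \emph{is} injective, because $({}^{g}X)_{k(X)}$ is an $\ell$-generic splitting variety over the field $k(X)$, which already splits $\a$; hence it has a point over a prime-to-$\ell$ extension $E/k(X)$, and specialization at that point followed by the transfer argument ($N\circ\mathrm{res}=[E:k(X)]$, a unit mod $\ell$) gives injectivity. With $\iota_{k_1}(gv)=\iota_{k_1}(v)$ for all $g$ in hand, the formula $\mathrm{res}_{k(X)/k}\circ N_{k_1/k}=\sum_{g\in\Gal(k_1/k)}\mathrm{res}_{k(X)/k_1}\circ g$ yields $\iota_k\bigl(N_{k_1/k}(v)\bigr)=[k_1{:}k]\,w$, so $w\in\mathrm{im}\,\iota_k$ as needed. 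Note this is a genuinely non-formal input ($\ell$-genericity of the norm variety), not just bookkeeping; the paper's terse ``diagram chase'' absorbs the same difficulty, as its remark on $\widetilde N$ signals. With this one step repaired, your proof is complete.
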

  
\begin{proof}
  Let $k$ be any field of characteristic $0$ not containing an
$\ell^{th}$ root of unity, $\zeta$.  Set $q = n+i$, $k'=k(\zeta)$,
$k_1=k'\cap k(X)$, $e=[k':k]$ and $G = \Gal(k'/k_1)$, as in the
statement of Theorem \ref{thm:2->l}(b). 
By Lemma \ref{basechange} and Remark \ref{rem:basechange},
the component $X'=X\times_{k_1}\Spec(k')$ of $X_{k'}$ 
is a norm variety for $\a$ over $k'$. The action of $G$ on $k'$
induces actions of $G$ on $X'$ and its 0-skeleton $\X'$, 
and induces the last map in Theorem \ref{thm:2->l}(b):
\[
\tilde K_{q}^M(k(X))/\ell \map{j} (\tilde K_{q}^M(k'(X'))/\ell)^G
\map{\partial} H^{q+1,q-1}(\X')^G.
\]
Since $e$ is prime to $\ell$, inverting $e$ in the exact sequence of 
Theorem \ref{thm:2->l} for $k'$ yields the exact sequence forming the
bottom row of the following diagram, in which the downward arrows are
base change maps and the upward arrows are the norm maps.
\begin{diagram-fixed}[1.9em]
{ H_{-i,-i}(X)[e^{-1}]  & K^M_i(k)[e^{-1}] & K^M_{q}(k)/\ell & 
   \tilde K_{q}^M(k(X))/\ell 
& H^{q+1,q-1}(\X')^G \\
  H_{-i,-i}(X')[e^{-1}] & K^M_i(k')[e^{-1}] & K^M_{q}(k')/\ell &
 \tilde K_{q}^M(k'(X'))/\ell  &
 H^{q+1,q-1}(\X') \\};
    \to{1-1}{1-2}^{\pi_*} \to{1-2}{1-3}^{\a\cup} \to{1-3}{1-4}^{\iota}
    \to{1-4}{1-5}^{\partial j}
    \to{2-1}{2-2}^{\pi'_*} \to{2-2}{2-3}^{\a\cup} \to{2-3}{2-4}^{\iota}
    \to{2-4}{2-5}^{\partial}
    \cofib{1-2}{2-2} 
    \cofib{1-3}{2-3} 
    \to{1-4}{2-4}^{j}
    \to{1-5}{2-5}
    \diagArrow{->,bend left}{2-1}{1-1}^N
    \diagArrow{->,bend left}{2-2}{1-2}^N
    \diagArrow{->,bend left}{2-3}{1-3}^N
\end{diagram-fixed}
As each $K$-group is covariantly functorial, the diagram with the
downward set of arrows commutes; the diagram with the upward set of
arrows commutes by naturality and the projection formula \cite[III.7.5.2]{WK}.  
The downward map $K_*^M(k) \rto K^M_*(k')$, followed by the norm map,
is multiplication by $e=[k':k]$. 
A diagram chase now shows that the top row of the diagram is exact.
\end{proof}
%


\begin{rem}
The map $j$ is also injective in the above diagram. To see this,
note that (by the projection formula)
the norm $K^M_q(k'(X'))/\ell\rto K^M_q(k(X))/\ell$ induces
a map $\tilde N$ from 
$\tilde K^M_q(k(X'))/\ell$ to $\tilde K^M_q(k(X))/\ell$, and the composition 
$\tilde{N}\,j$ 
is multiplication by $[k':k_1]$, not $e$. Note that $\tilde{N}$ does not 
commute with the norm $K^M_q(k')/\ell\rto K^M_q(k)/\ell$ unless $k=k_1$.
\end{rem}

\section{The exact sequence} \label{sec:exact}

In this section and the next, we assume that our field $k$ contains
an $\ell$-th root of unity, $\zeta$. As before, we fix
a symbol $\a$ and a norm variety $X$ for $\a$, 
writing $\X$ for the $0$-coskeleton of $X$.

Given a complex $\F^\mathdot$ of \'etale sheaves, 
let $\H^q=\H_\nis^q(\F^\mathdot)$ denote the Nisnevich sheaf 
associated to the presheaf $H_\et^q(-,\F^\mathdot)$. 
If $\F$ is a locally constant \'etale
sheaf (such as $\muell{i}$), $\H^q(\F)$ is a Nisnevich sheaf with
transfers, by \cite[6.11, 6.21 and 13.1]{mvw}.

\begin{lem}\label{H0(Hq)}
If $\F$ is a sheaf, $H^0(\X,\H^q)$ is the equalizer of 
$H^0(X,\H^q) \rrto H^0(X\times X,\H^q)$.
\end{lem}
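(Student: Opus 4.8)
The plan is to compute $H^0(\X,\H^q)$ by descent along the simplicial scheme $\X$, treating $\mathcal{G}:=\H^q$ as a single Nisnevich sheaf (with transfers). Since $\X$ is the simplicial scheme $p\mapsto X^{p+1}$, there is a hypercohomology (descent) spectral sequence
\[
E_1^{p,s}=H^s(X^{p+1},\mathcal{G})\ \Longrightarrow\ H^{p+s}(\X,\mathcal{G}),
\]
whose $d_1$ differential is the alternating sum of the pullbacks along the coface maps of $\X$. First I would record that in total degree $0$ the only contributing term is $E_1^{0,0}=H^0(X,\mathcal{G})$, so that $H^0(\X,\mathcal{G})=E_\infty^{0,0}$.

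Next I would compute $E_2^{0,0}$. There is no nonzero incoming $d_1$ into $E_1^{0,0}$, since $E_1^{-1,0}$ vanishes, so $E_2^{0,0}=\ker\bigl(d_1\colon E_1^{0,0}\to E_1^{1,0}\bigr)$. The two coface maps $X\times X\to X$ of the $0$-coskeleton are exactly the two projections, so $d_1=\mathrm{pr}_1^*-\mathrm{pr}_2^*\colon H^0(X,\mathcal{G})\to H^0(X\times X,\mathcal{G})$, and its kernel is precisely the equalizer of the two projection-induced maps $H^0(X,\H^q)\rightrightarrows H^0(X\times X,\H^q)$. This is the asserted description.

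Finally I would check convergence at this spot: the higher differentials out of $E_r^{0,0}$ land in $E_r^{r,1-r}$, whose second index $1-r$ is negative for $r\geq 2$, so these targets vanish, and there are likewise no differentials into $E_r^{0,0}$. Hence $E_2^{0,0}=E_\infty^{0,0}=H^0(\X,\mathcal{G})$, which completes the identification.

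The argument is essentially formal; the only points requiring care are the identification of the $d_1$ differential as $\mathrm{pr}_1^*-\mathrm{pr}_2^*$ (i.e.\ that the two face maps of the $0$-coskeleton are the projections, and that the degeneracies contribute nothing to the equalizer description) and the degree bookkeeping that rules out higher differentials. I expect no genuine obstacle here: the underlying content is simply that $H^0$ of a simplicial scheme is the limit of the associated cosimplicial group $p\mapsto H^0(X^{p+1},\mathcal{G})$, and the limit of a cosimplicial abelian group is the equalizer of its two coface maps $A^0\rightrightarrows A^1$.
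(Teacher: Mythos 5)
Your argument is correct and is essentially the same as the paper's: the paper justifies the lemma by the hypercohomology spectral sequence $E_1^{p,s}=H^s(X^{p+1},\F)\Rightarrow H^{p+s}(\X,\F)$ (alongside the remark that this is simply the definition of $H^0$ on a simplicial scheme, citing Deligne), and your computation of $E_2^{0,0}$ as the equalizer of the two projection pullbacks, together with the degree bookkeeping ruling out higher differentials, is exactly the detailed form of that argument.
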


\begin{proof}
This is the definition of $H^0$ on a simplicial scheme; see \cite[5.2.2]{D}.
Alternatively, it follows from the spectral sequence 
$E_1^{p,q}=H^q(X^{p+1},\F)\Rightarrow H^{p+q}(\X,\F)$
for the cohomology of a sheaf on a simplicial scheme.
\end{proof}

\begin{rem}\label{rem:11.1}
The Nisnevich sheaves $\H^q(\muell{q})$ are homotopy invariant 
sheaves with transfers, by \cite[24.1]{mvw}. By  \cite[11.1]{mvw},
if $X$ is smooth then $H^0(X,\H^q(\muell{q}))$ --- and hence
$H^0(\X,\H^q(\muell{q}))$ --- injects into 
$\H^q(\muell{q})(\Spec k(X))=H_\et^q(k(X),\muell{q})\cong K^M_q(k(X))/\ell$.
\end{rem}

\begin{prop}\label{prop:coeff-triangle}
If $\mu_\ell\subset k^\times$,
there is a distinguished triangle in $\DM$ for each $q\ge0$:
  \[
\Z/\ell(q-1) \map{\zeta} \Z/\ell(q) \rto \H^q(\muell{q})[-q] \rto. 
\]
\end{prop}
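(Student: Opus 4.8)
The plan is to realize $\zeta$ as a morphism in $\DM$, complete it to a distinguished triangle, and identify the cofiber by computing its Nisnevich cohomology sheaves. Since $\mu_\ell\subset k^\times$, the root of unity is a class $\zeta\in\mu_\ell(k)=H^{0,1}(\Spec k,\Z/\ell)=\Hom_{\DM}(\Z/\ell,\Z/\ell(1))$; tensoring with $\Z/\ell(q-1)$ produces the displayed map $\zeta\colon\Z/\ell(q-1)\to\Z/\ell(q)$. Completing it to a distinguished triangle $\Z/\ell(q-1)\xrightarrow{\zeta}\Z/\ell(q)\to C\to\Z/\ell(q-1)[1]$ in $\DM$, it then remains to prove $C\cong\H^q(\muell{q})[-q]$, which I would obtain by showing that the cohomology sheaves $\mathcal{H}^i(C)$ vanish except in degree $q$, where they recover $\H^q(\muell{q})$.

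The essential input is the Beilinson--Lichtenbaum property, a consequence of the norm residue theorem of Voevodsky and Rost: the change-of-topology map identifies $\Z/\ell(q)$ with $\tau_{\le q}R\epsilon_*\muell{q}$, where $\epsilon$ is the projection from the \'etale to the Nisnevich site, so that $\mathcal{H}^i(\Z/\ell(q))\cong\H^i(\muell{q})$ for $i\le q$ while $\mathcal{H}^i(\Z/\ell(q))=0$ for $i>q$, and similarly in weight $q-1$. Because $\zeta$ is a generator of the cyclic group $\mu_\ell$, cup product with $\zeta$ is an isomorphism of \'etale sheaves $\muell{(q-1)}\xrightarrow{\sim}\muell{q}$, and hence induces isomorphisms $\H^i(\muell{(q-1)})\xrightarrow{\sim}\H^i(\muell{q})$ for every $i$. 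By multiplicativity of the comparison map, these coincide, in the range $i\le q-1$, with the maps $\mathcal{H}^i(\zeta)$ induced on cohomology sheaves by our morphism.

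Finally I would run the long exact sequence of cohomology sheaves attached to the triangle. For $i\le q-1$ the map $\mathcal{H}^i(\zeta)$ is the isomorphism of the previous step (both sides computing $\H^i$, since $i\le q-1\le q$), and together with the isomorphism at $i+1$ (or with the vanishing $\mathcal{H}^q(\Z/\ell(q-1))=0$ when $i=q-1$) this forces $\mathcal{H}^i(C)=0$; for $i>q$ the sheaves of both $\Z/\ell(q-1)$ and $\Z/\ell(q)$ vanish, so $\mathcal{H}^i(C)=0$ as well; and in degree $q$ the sequence reads $0=\mathcal{H}^q(\Z/\ell(q-1))\to\H^q(\muell{q})\to\mathcal{H}^q(C)\to\mathcal{H}^{q+1}(\Z/\ell(q-1))=0$, giving $\mathcal{H}^q(C)\cong\H^q(\muell{q})$. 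Hence $C$ has its only nonzero cohomology sheaf in degree $q$, equal to $\H^q(\muell{q})$, so $C\cong\H^q(\muell{q})[-q]$, as claimed. I expect the genuine content to lie not in the diagram chase but in the appeal to Beilinson--Lichtenbaum and, slightly more delicately, in checking that the $\DM$-morphism $\zeta$ realizes cup-with-$\zeta$ on the \'etale cohomology sheaves throughout the overlap range $i\le q-1$, i.e.\ in the multiplicative compatibility of the comparison map.
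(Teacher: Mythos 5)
Your proof is correct and rests on the same essential inputs as the paper's --- the Beilinson--Lichtenbaum identification $\Z/\ell(m)\cong\tau^{\le m}R\pi_*\muell{m}$ for $\pi:\Sm_\et\rto\Sm_\nis$, and the isomorphism $\cup\zeta:\muell{q-1}\cong\muell{q}$ --- but it is packaged differently. The paper forms no cone: it starts from the canonical truncation triangle $\tau^{\le q-1}C\rto\tau^{\le q}C\rto H^q(C)[-q]\rto$ for $C=R\pi_*\muell{q}$, rewrites $\tau^{\le q-1}C$ as $\tau^{\le q-1}R\pi_*\muell{q-1}$ via $\cup\zeta$, and then applies Beilinson--Lichtenbaum to both truncations; on that route the third term and the distinguishedness of the triangle come for free, and the first map is compatible with $\cup\zeta$ on \'etale sheaves by construction. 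Your cone-plus-long-exact-sequence argument re-derives the truncation triangle by hand (harmless), at the cost of exactly the compatibility you flag at the end: that the $\DM$-morphism ``multiplication by $\zeta\in H^{0,1}(k,\Z/\ell)\cong\mu_\ell(k)$'' induces $\cup\zeta:\H^i(\muell{q-1})\rto\H^i(\muell{q})$ in the overlap range $i\le q-1$. That check does go through, by multiplicativity of the comparison $\Z/\ell(m)\rto R\pi_*\muell{m}$ together with the fact that it carries the motivic class of $\zeta$ to $\zeta$ (the identification $H^{0,1}(k,\Z/\ell)\cong H_\et^0(k,\mu_\ell)$ being itself an instance of Beilinson--Lichtenbaum); and your formulation has the small advantage of pinning down the first map of the triangle as cup product with the motivic $\zeta$-class, which is the form in which it is used later, e.g.\ in Proposition \ref{prop:right-seq}. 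Two points to tighten. First, ``tensoring with $\Z/\ell(q-1)$'' in the $\Z$-linear $\DM$ is slightly glib, since $\Z/\ell(1)\otimes\Z/\ell(q-1)$ differs from $\Z/\ell(q)$ by a Tor term; either work $\Z/\ell$-linearly, or tensor $\Z\rto\Z/\ell(1)$ with $\Z(q-1)$ and factor the result through $\Z/\ell(q-1)$, the indeterminacy group $\Hom_{\DM}(\Z(q-1)[1],\Z/\ell(q))\cong H^{-1,1}(k,\Z/\ell)$ being zero. Second, to land the conclusion $C\cong\H^q(\muell{q})[-q]$ in $\DM$ rather than merely in the derived category of Nisnevich sheaves with transfers, note that the cone of a map of $\mathbb{A}^1$-local complexes is $\mathbb{A}^1$-local and that $\H^q(\muell{q})$ is a homotopy invariant Nisnevich sheaf with transfers, as recorded at the start of Section \ref{sec:exact}.
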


\begin{proof}
For any Nisnevich complex $C$ and any $q$ we have a distinguished triangle 
\[
\tau^{\leq q-1}C  \rto \tau^{\leq q} C \rto H^q(C)[-q] \rto.
\]
Now let $C$ be the total direct image $R\pi_*\muell{q}$, where
$\pi:\Sm_\et\rto\Sm_\nis$, so $H_\nis^*(X,C)=H_\et^*(X,\muell{q})$.  
Since $\mu_\ell\subset k^\times$, multiplication by $\zeta$ induces 
an isomorphism $\muell{q-1}\cong\muell{q}$.
Thus we have an isomorphism $\cup\zeta:
R\pi_*\muell{q-1} \map{\simeq} C$.  In this case, the triangle reads:
\[
\tau^{\le q-1}R\pi_*(\muell{q-1})  \map{\zeta} 
\tau^{\le q}  R\pi_*(\muell{q}) \rto \H^q(\muell{q})[-q] \rto.
\]
By the Beilinson-Lichtenbaum conjecture (which has now been proven; 
see \cite[6.17]{mc/l} or \cite[Thm.\,B]{HW}), $\Z/\ell(q)\cong \tau^{\le q}C$ and
$\Z/\ell(q-1)\cong\tau^{\le q-1}R\pi_*\muell{q-1}\cong\tau^{\le q-1}C$.
Combining these facts yields the distinguished triangle in question.
\end{proof}


Let $\tilde\X$ denote the simplicial cone of $\X \rto \Spec k$.
As a consequence of the Beilinson-Lichtenbaum conjectures, 
Voevodsky observed that

\begin{lem}\label{ovv2.2}
If $X$ is smooth, the map $H^{p,q}(k,\Z/\ell)\map{} H^{p,q}(\X,\Z/\ell)$
is an isomorphism if $p\le q$ and an injection if $p=q+1$.
That is, $H^{p,q}(\tilde\X,\Z/\ell)=0$ if $p\le q+1$.
\end{lem}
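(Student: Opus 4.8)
The plan is to prove the stated isomorphism/injection directly and then read off the vanishing of $H^{p,q}(\tilde\X,\Z/\ell)$ from the long exact sequence of the cofibre sequence $\X\rto\Spec k\rto\tilde\X$ that defines $\tilde\X$. The argument has two essentially independent ingredients: first, a comparison of motivic with \'etale cohomology in the Beilinson--Lichtenbaum range, applied both to $\Spec k$ and to the simplicial scheme $\X$; and second, a cohomological-descent identification of the \'etale cohomology of $\X$ with that of $\Spec k$.

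For the first ingredient I would reuse the Beilinson--Lichtenbaum isomorphism $\Z/\ell(q)\cong\tau^{\le q}C$ with $C=R\pi_*\muell{q}$, exactly as in the proof of Proposition~\ref{prop:coeff-triangle}. For any scheme or simplicial scheme $Y$ the distinguished triangle $\tau^{\le q}C\rto C\rto\tau^{\ge q+1}C$ gives a long exact sequence in hypercohomology; since $\tau^{\ge q+1}C$ has cohomology sheaves concentrated in degrees $\ge q+1$, the hypercohomology spectral sequence forces $\HH^p(Y,\tau^{\ge q+1}C)=0$ for all $p\le q$. Hence the comparison map
\[
H^{p,q}(Y,\Z/\ell)=\HH^p_\nis(Y,\tau^{\le q}C)\rto\HH^p_\nis(Y,C)=H^p_\et(Y,\muell{q})
\]
is an isomorphism for $p\le q$ and an injection for $p=q+1$. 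Applying this to $Y=\Spec k$ and to $Y=\X$ produces vertical comparison maps fitting into a commuting square whose horizontal maps are the restriction $H^{p,q}(k)\rto H^{p,q}(\X)$ and its \'etale analogue.

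The second ingredient is that $X\rto\Spec k$ is a covering for the \'etale topology. Since $X$ is smooth and nonempty, $X_{k_s}$ is a nonempty smooth variety over the separable closure and therefore has a $k_s$-rational point; thus $X\rto\Spec k$ admits sections \'etale-locally. Its $0$-coskeleton $\X$ is then the \v{C}ech nerve of an \'etale cover, so cohomological descent yields $H^p_\et(\X,\muell{q})\cong H^p_\et(\Spec k,\muell{q})$ for every $p$ (the \'etale-local contractibility of $\X$ exploited in \cite{ovv}). Feeding this into the commuting square finishes the proof: for $p\le q$ both verticals and the bottom horizontal are isomorphisms, so the top map $H^{p,q}(k,\Z/\ell)\rto H^{p,q}(\X,\Z/\ell)$ is an isomorphism; for $p=q+1$ the two verticals are injective and the bottom is an isomorphism, so commutativity makes the composite $H^{q+1,q}(k)\rto H^{q+1}_\et(\X)$ injective, whence the top map is injective. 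The long exact sequence of $\X\rto\Spec k\rto\tilde\X$ then translates ``isomorphism for $p\le q$, injection for $p=q+1$'' into $H^{p,q}(\tilde\X,\Z/\ell)=0$ for all $p\le q+1$.

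The one step that is not purely formal is the \'etale descent isomorphism $H^p_\et(\X,\muell{q})\cong H^p_\et(k,\muell{q})$: one must verify that the smooth morphism $X\rto\Spec k$, which is neither finite nor \'etale, is nonetheless a covering in the \'etale topology --- this is precisely where the existence of a point of $X$ over $k_s$ is used --- and then invoke cohomological descent for the resulting hypercover $\X$. Everything else, namely the truncation comparison, the diagram chase at $p=q+1$, and the passage to $\tilde\X$, is formal.
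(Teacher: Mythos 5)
Your argument is correct, and it is precisely the argument behind the paper's proof, which is given only by citation: the results cited there (\cite[6.9 and 7.3]{mc/2} and \cite[1.37]{HW}) are proved exactly by your two ingredients, namely \'etale cohomological descent for the \v{C}ech nerve $\X$ of $X\rto\Spec k$ (using that a smooth nonempty $X$ has points over $k_s$, so $X\rto\Spec k$ is an \'etale-local epimorphism) combined with the Beilinson--Lichtenbaum identification $\Z/\ell(q)\cong\tau^{\le q}R\pi_*\muell{q}$ and the truncation long exact sequence. So your proposal is a faithful reconstruction of the standard proof rather than a new route.
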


\begin{proof}
See \cite[6.9 and 7.3]{mc/2} or \cite[1.37]{HW}.
\end{proof}

\begin{prop}\label{prop:right-seq}
If $\mu_\ell\subset k^\times$, there is a natural five-term exact sequence:
\begin{equation*} 
 \makeshort{ 0 \rto H^{q,\, q-1}(\X,\Z/\ell)
      \map{\zeta} K_q^M(k)/\ell \rto H^0(\X,\H^q(\muell{q}))
      \map{\partial} H^{q+1,q-1}(\X,\Z/\ell).}
  \end{equation*}
\end{prop}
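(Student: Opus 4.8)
The plan is to obtain the five-term sequence directly from the distinguished triangle of Proposition \ref{prop:coeff-triangle} by applying the hyperhomology functor $H^*(\X,-)$ and extracting the relevant piece of the associated long exact sequence. Concretely, the triangle
\[
\Z/\ell(q-1) \map{\zeta} \Z/\ell(q) \rto \H^q(\muell{q})[-q] \rto
\]
yields, after applying $\Hom_{\DM}(\Rtr(\X),-[*])$ (equivalently, motivic hypercohomology of $\X$), a long exact sequence relating the groups $H^{p,q-1}(\X,\Z/\ell)$, $H^{p,q}(\X,\Z/\ell)$, and $H^{p-q}(\X,\H^q(\muell{q}))$. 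The point is to read off the five consecutive terms sitting in the correct total degree. First I would identify the term $H^{p-q}(\X,\H^q(\muell{q}))$ for $p=q$, which is exactly $H^0(\X,\H^q(\muell{q}))$, the equalizer appearing in Lemma \ref{H0(Hq)}.

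Next I would pin down the other four terms using Lemma \ref{ovv2.2}. The long exact sequence in the range around total degree $q$ reads
\[
H^{q-1,q}(\X,\Z/\ell) \rto H^{0}(\X,\H^q) \map{\partial} H^{q+1,q-1}(\X,\Z/\ell) \map{\zeta} H^{q+1,q}(\X,\Z/\ell),
\]
preceded on the left by $H^{q,q-1}(\X,\Z/\ell)\map{\zeta}H^{q,q}(\X,\Z/\ell)\rto H^0(\X,\H^q)$. By Lemma \ref{ovv2.2}, the map $H^{q,q}(k,\Z/\ell)\rto H^{q,q}(\X,\Z/\ell)$ is an isomorphism and $H^{q,q}(k,\Z/\ell)\cong K_q^M(k)/\ell$, so $H^{q,q}(\X,\Z/\ell)$ is identified with $K_q^M(k)/\ell$. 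The leftmost interesting term $H^{q,q-1}(\X,\Z/\ell)$ survives as the kernel of $\zeta\colon K_q^M(k)/\ell\to H^0(\X,\H^q)$; what makes the sequence start with $0$ is that the incoming map into $H^{q,q-1}(\X,\Z/\ell)$ from $H^{q-1,q-1}(\X,\Z/\ell)$ is an isomorphism by Lemma \ref{ovv2.2} (the relevant $\tilde\X$-cohomology vanishes in the range $p\le q+1$), so its cokernel contributes nothing and the map $\zeta$ out of $H^{q,q-1}$ is injective.

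The main obstacle, and the step requiring genuine care, is bookkeeping the indices: the triangle has a shift by $[-q]$ on the third term, so I must track exactly which motivic cohomology bidegree of $\X$ maps to which, and verify that the connecting maps land in $H^{q+1,q-1}(\X,\Z/\ell)$ rather than a neighboring group. The vanishing input is entirely supplied by Lemma \ref{ovv2.2}, which controls $H^{p,q}(\tilde\X,\Z/\ell)$ for $p\le q+1$; the delicate point is confirming that the first map is injective (i.e.\ $H^{q,q-1}(\X)\hookrightarrow K_q^M(k)/\ell$) and that the term just to the left of $H^{q,q-1}(\X,\Z/\ell)$ vanishes, both of which follow once the bidegrees of the cone $\tilde\X$ are matched against the range in Lemma \ref{ovv2.2}. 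Naturality of the five-term sequence is automatic since every map arises from the distinguished triangle and the functoriality of motivic hypercohomology, so once the index computation is settled the proof is essentially complete.
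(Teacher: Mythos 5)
Your overall strategy is the paper's: apply hypercohomology over $\X$ to the triangle of Proposition \ref{prop:coeff-triangle}, use the shift $H^q(\X,C[-q])=H^0(\X,C)$ to produce the term $H^0(\X,\H^q(\muell{q}))$, and identify $H^{q,q}(\X,\Z/\ell)\cong H^{q,q}(k,\Z/\ell)\cong K^M_q(k)/\ell$ via Lemma \ref{ovv2.2} with $p=q$. However, your justification of the initial injectivity (the leading ``$0$'' of the five-term sequence) has a genuine gap. In the long exact sequence, the term immediately preceding $H^{q,q-1}(\X,\Z/\ell)$ is
\[
H^{q-1}\bigl(\X,\H^q(\muell{q})[-q]\bigr)=H^{-1}\bigl(\X,\H^q(\muell{q})\bigr),
\]
not $H^{q-1,q-1}(\X,\Z/\ell)$: the latter group does not map into $H^{q,q-1}(\X,\Z/\ell)$ anywhere in this sequence (it sits farther left, mapping by $\zeta$ into $H^{q-1,q}(\X,\Z/\ell)$). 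Moreover, Lemma \ref{ovv2.2} cannot supply the vanishing you need, since it controls motivic cohomology of $\tilde\X$ with $\Z/\ell(*)$ coefficients, whereas the group in question is hypercohomology of $\X$ with coefficients in the Nisnevich sheaf $\H^q(\muell{q})$. (Applying Lemma \ref{ovv2.2} to $H^{q,q-1}(\X)$ itself only shows that $0=H^{q,q-1}(k)$ injects into it, which says nothing.) And you cannot instead argue that the map into $H^{-1}(\X,\H^q(\muell{q}))$ is surjective, since that is equivalent to the vanishing of the connecting map you are trying to kill--the argument would be circular.

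The correct, and simpler, reason--the one the paper uses--is that $\H^q(\muell{q})$ is a single sheaf concentrated in degree $0$, so $H^{-1}(\X,\H^q(\muell{q}))=0$: by the spectral sequence $E_1^{s,t}=H^t(X^{s+1},\F)\Rightarrow H^{s+t}(\X,\F)$ quoted in the proof of Lemma \ref{H0(Hq)}, hypercohomology of a sheaf on a simplicial scheme vanishes in negative total degree. Once you replace your isomorphism claim with this vanishing, the rest of your bookkeeping goes through; but also correct the misprint in your displayed segment, where the term mapping to $H^0(\X,\H^q(\muell{q}))$ must be $H^{q,q}(\X,\Z/\ell)$ (as your subsequent prose correctly states), not $H^{q-1,q}(\X,\Z/\ell)$, and note that the first map of the five-term sequence, not the map out of $K^M_q(k)/\ell$, is the one called $\zeta$.
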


\begin{proof}
Apply $H^q(\X,-)$ to the distinguished triangle in
Proposition~\ref{prop:coeff-triangle}. Using the fact that
$H^q(\X,C[j])=H^{q+j}(\X,C)$ 
and writing 
$\H^q$ for $\H^q(\muell{q})$, we get
\[
H^{-1}(\X,\H^q) \map{\partial} H^{q,q-1}(\X,\Z/\ell) \map{\zeta} 
H^{q,q}(\X,\Z/\ell) \rto H^0(\X,\H^q) \map{\partial} H^{q+1,q-1}(\X,\Z/\ell).
\]
The first term ($H^{-1}$) is $0$ because the coefficients are a sheaf.
By Lemma \ref{ovv2.2} with $p=q$, the third term is
$H^{q,q}(k,\Z/\ell)=K^M_q(k)/\ell$ \cite[Theorem 5.1]{mvw}.
\end{proof}

\begin{cor}\label{2->1:n=1}
Theorem \ref{thm:2->l} holds for $n=1$.
\end{cor}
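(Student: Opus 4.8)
The plan is to use the explicit norm variety. For $n=1$ a nontrivial symbol is $\a=(a)$ with $a\notin k^{\times\ell}$, and (as recalled in Example~\ref{ex:i=0}) $X=\Spec E$ with $E=k(\root\ell\of{a})$ is a norm variety. Since $\mu_\ell\subset k^\times$, the extension $E/k$ is Galois with cyclic group $G$ of order $\ell$. Writing $q=i+1$, the assertion to prove is the exactness of
\[
\bar H_{-i,-i}(X) \map{\pi_*} K^M_i(k) \map{\a\cup} K^M_q(k)/\ell \map{\iota} \tilde K^M_q(k(X))/\ell \rto H^{q+1,q-1}(\X,\Z/\ell),
\]
which I will obtain by splicing the classical sequence~\eqref{eq:n=1} (for exactness at the two left-hand $K$-groups) with Proposition~\ref{prop:right-seq} (for exactness at $\tilde K^M_q(k(X))/\ell$).

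First I would make the two right-hand terms explicit, exploiting that $\dim X=0$. Then $X\times X=\Spec(E\oo_kE)$ is a disjoint union of $\ell$ copies of $\Spec E$ indexed by $G$, so there is no ramification and the injection of Remark~\ref{rem:11.1} is an equality: $H^0(X,\H^q(\muell{q}))=K^M_q(E)/\ell$ and $H^0(X\times X,\H^q(\muell{q}))=\prod_G K^M_q(E)/\ell$. Under the isomorphism $E\oo_kE\cong\prod_G E$ one projection is the diagonal and the other is twisted by the $G$-action, so by Lemma~\ref{H0(Hq)} both equalizers $H^0(\X,\H^q(\muell{q}))$ and $\tilde K^M_q(k(X))/\ell$ coincide with $(K^M_q(E)/\ell)^G$, and $\iota$ is the restriction $K^M_q(k)/\ell\rto(K^M_q(E)/\ell)^G$.

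With these identifications the pieces fit together. By Lemma~\ref{lem:H90} and Example~\ref{ex:N=norm}, $\bar H_{-i,-i}(X)\cong(K^M_iE)_G$ and $\pi_*$ is the norm $N$; exactness at $K^M_i(k)$ and at $K^M_q(k)/\ell$ is then exactly the exactness of~\eqref{eq:n=1}, whose final term $(K^M_{i+1}(E)/\ell)^G$ is now identified with $\tilde K^M_q(k(X))/\ell$ and whose final map is $\iota$. Exactness at $\tilde K^M_q(k(X))/\ell$ is the statement that the kernel of the last map equals the image of $\iota$, which is furnished by Proposition~\ref{prop:right-seq} once its map $\partial\colon H^0(\X,\H^q(\muell{q}))\to H^{q+1,q-1}(\X,\Z/\ell)$ is taken as the final map of the sequence. (For $i<0$ every Milnor $K$-group vanishes and there is nothing to prove.)

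The step that needs genuine care is the compatibility of maps in the splice: I must check that the map $K^M_q(k)/\ell\to H^0(\X,\H^q(\muell{q}))$ produced by Proposition~\ref{prop:right-seq} agrees, under $H^0(\X,\H^q(\muell{q}))=(K^M_q(E)/\ell)^G$, with the restriction map $\iota$ of~\eqref{eq:n=1}. This is a naturality statement for $\Spec k(X)\rto\Spec k$ together with the compatibility of the norm residue isomorphism with restriction; it is what allows the two independently derived exact sequences to be read as a single five-term sequence. Note that no input from Theorem~\ref{thm:M} is needed, which is why the case $n=1$ can be settled before the motivic machinery is deployed.
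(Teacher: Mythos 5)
Your argument is essentially the paper's own proof: identify $X=\Spec E$ with $X\times X\cong\coprod_G X$, compute $H^0(\X,\H^q(\muell{q}))\cong(K^M_q(E)/\ell)^G\cong\tilde K^M_q(k(X))/\ell$ via Lemma~\ref{H0(Hq)}, and splice Proposition~\ref{prop:right-seq} with \eqref{eq:n=1} using Lemma~\ref{lem:H90} and Example~\ref{ex:N=norm}. The one omission is the opening reduction: since the Corollary asserts both parts of Theorem~\ref{thm:2->l} (including part~(b), where $\mu_\ell\not\subset k^\times$), you must first invoke Proposition~\ref{prop:5=>2} to justify your standing assumption that $\mu_\ell\subset k^\times$, exactly as the paper does in its first sentence.
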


\begin{proof}
By Proposition \ref{prop:5=>2}, we may assume $\zeta\in k$
so that $X=\Spec(E)$, $E=k(\root\ell\of a)$ and 
$X\times X=\coprod_GX$, where $G=\Gal(E/k)$.
By Lemma \ref{H0(Hq)}, $H^0(\X,\H^q)$ is
the equalizer of $H^q(X,\muell{q})\rrto \prod_G H^q(X,\muell{q})$,
i.e., $H^q(X,\muell{q})^G$.
Since $H^q(X,\muell{q})$ is $K^M_q(E)/\ell$, we have
$H^0(\X,\H^q) \cong (K^M_q(E)/\ell)^G$.
Proposition \ref{prop:right-seq} yields exactness of
\[
K_q^M(k)/\ell \rto (K^M_q(E)/\ell)^G \map{\partial} H^{q+1,q-1}(\X,\Z/\ell).
\]
Now combine this with the exact sequence \eqref{eq:n=1},
using Lemma \ref{lem:H90} to identify $\bar H_{-i,-i}(X)$.
\end{proof}

Our next goal, achieved in Corollary \ref{induced.map}, is
to connect the first map in Proposition \ref{prop:right-seq}
to the cup product with $\a$. We assume that $n\ge2$, so that 
$d=\dim(X)>0$ and $s_d(X)$ is defined.

\begin{prop}\label{prop:left-seq}
Let $X$ be a norm variety for $\a$ 
such that $s_d(X)\not\equiv0\pmod{\ell^2}$.
For $i\ge0$, there is a four-term exact sequence
\[\makeshort{
\bar{H}_{-i,-i}(X)_{(\ell)}\map{\pi_*} K_i^M(k)_{(\ell)} 
\map{r^*} H^{i+2d+1,i+d}(D,\Z_{(\ell)})\rto0.
 }\]
Suppose in addition that $X$ has a point of degree $\ell$.
Then the following sequence is exact:
\[\makeshort{
\bar{H}_{-i,-i}(X)\map{\pi_*} K_i^M(k)\map{r^*} H^{i+2d+1,i+d}(D,\Z_{(\ell)})\rto0.
 }\]
\end{prop}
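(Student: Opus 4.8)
The plan is to produce the sequence by applying the contravariant functor $\Hom_{\DM}(-,\Z_{(\ell)}(i+d)[i+2d+1])$ to the distinguished triangle \eqref{3.4.2} of Theorem~\ref{thm:M}, namely $\Rtr(\grX)\oo\LL^d\xrightarrow{y^D}M\xrightarrow{u}D\xrightarrow{r}\Rtr(\grX)\oo\LL^d[1]$. Writing $T=\Z_{(\ell)}(i+d)[i+2d+1]$ and using $\LL^d=\Z_{(\ell)}(d)[2d]$, the term $\Hom(D,T)$ is exactly $H^{i+2d+1,i+d}(D,\Z_{(\ell)})$, while the neighbouring terms coming from $\Rtr(\grX)\oo\LL^d$ untwist to $\Hom(\Rtr(\grX)\oo\LL^d[1],T)=H^{i,i}(\grX,\Z_{(\ell)})$ and $\Hom(\Rtr(\grX)\oo\LL^d,T)=H^{i+1,i}(\grX,\Z_{(\ell)})$. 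The connecting map out of $H^{i,i}(\grX,\Z_{(\ell)})$ is $r^*$, so the relevant segment of the long exact sequence reads
\[
\Hom(M,\Z_{(\ell)}(i+d)[i+2d])\xrightarrow{(y^D)^*}H^{i,i}(\grX,\Z_{(\ell)})\xrightarrow{r^*}H^{i+2d+1,i+d}(D,\Z_{(\ell)})\xrightarrow{u^*}\Hom(M,\Z_{(\ell)}(i+d)[i+2d+1]).
\]

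First I would dispose of the two easy ends. Since $M=(X,e)$ is a summand of $X$, the rightmost group is a summand of $H^{i+2d+1,i+d}(X,\Z_{(\ell)})$; but the hypercohomology spectral sequence of Proposition~\ref{H-n-n} is concentrated in total degree $\le 2d+i$, so $H^{i+2d+1,i+d}(X,\Z_{(\ell)})=0$, whence $u^*$ has trivial target and $r^*$ is automatically surjective. Next I would identify $H^{i,i}(\grX,\Z_{(\ell)})$ with $K^M_i(k)_{(\ell)}$: over $\Z/\ell$ this is the case $p=q=i$ of Lemma~\ref{ovv2.2}, and to upgrade to $\Z_{(\ell)}$ coefficients I would run a Bockstein argument on the reduced simplicial scheme $\tilde\X$, using $H^{p,q}(\tilde\X,\Z/\ell)=0$ for $p\le q+1$ together with the fact that the $\Z_{(\ell)}$-motivic cohomology of $\tilde\X$ is $\ell$-power torsion (as $\X$ acquires a point over a prime-to-$\ell$ splitting extension), to conclude $H^{p,q}(\tilde\X,\Z_{(\ell)})=0$ for $p\le q$ and hence $H^{i,i}(\grX,\Z_{(\ell)})\cong H^{i,i}(k,\Z_{(\ell)})=K^M_i(k)_{(\ell)}$.

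The crux is the left-hand term. By Proposition~\ref{H-n-n} (motivic homology equals Borel--Moore homology for smooth projective $X$) we have $\Hom(M,\Z_{(\ell)}(i+d)[i+2d])=e^*H_{-i,-i}(X)_{(\ell)}$, and I must identify this with $\bar H_{-i,-i}(X)_{(\ell)}$ and the map $(y^D)^*$ with the proper pushforward $\pi_*$. This is where the symmetric structure of $M$ (Theorem~\ref{thm:M}(i), $e=e^t$) and the twisted duality carried by $y^D$ enter: the duality presents $\Hom(M,-)$ through $\Rtr(\grX)$, whose two lowest simplicial face maps $X\times X\rightrightarrows X$ produce precisely the coequalizer defining $\bar H_{-i,-i}(X)$, while $y$ being the structure map $M\to\Rtr(\grX)$ forces its dual to compute the pushforward, identified with the norm through Example~\ref{ex:N=norm}. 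I expect this identification --- matching the abstract idempotent summand and the map $(y^D)^*$ with the concrete coequalizer and $\pi_*$ --- to be the main obstacle, since it is the one step that genuinely ties the motivic bookkeeping of Theorem~\ref{thm:M} to the explicit Milnor $K$-theory description. Granting it, exactness of the displayed four-term segment is exactly the first ($\Z_{(\ell)}$) assertion.

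Finally, for the integral statement I would exploit the degree-$\ell$ point $x$. Its norm satisfies $N_{k(x)/k}\circ\mathrm{res}=\ell$ on $K^M_i(k)$, and by Example~\ref{ex:N=norm}(iii) $N_{k(x)/k}$ factors as $\pi_*\circ[x,-]$ through $\bar H_{-i,-i}(X)$; hence $\ell K^M_i(k)\subseteq\operatorname{im}\pi_*\subseteq\ker r^*$. Thus $r^*$ factors through $K^M_i(k)/\ell$, so its image --- and therefore the target $H^{i+2d+1,i+d}(D,\Z_{(\ell)})$, which is generated over $\Z_{(\ell)}$ by that image --- is $\ell$-torsion. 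On an $\ell$-torsion $\Z_{(\ell)}$-module every prime-to-$\ell$ integer acts invertibly, so clearing the denominators that appear when passing from $K^M_i(k)_{(\ell)}$ back to $K^M_i(k)$ costs nothing: combining $\ell K^M_i(k)\subseteq\operatorname{im}\pi_*$ with the $\Z_{(\ell)}$-exactness and a B\'ezout argument ($\gcd(\ell,m)=1$ for the offending $m$) upgrades both the surjectivity of $r^*$ and the identity $\ker r^*=\operatorname{im}\pi_*$ from $\Z_{(\ell)}$ to $\Z$, yielding the second exact sequence.
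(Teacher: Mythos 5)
Your proposal reproduces the paper's proof essentially step for step: both apply $H^{i+2d+1,i+d}(-,\Z_{(\ell)})$ to the triangle \eqref{3.4.2}, kill the term $H^{i+2d+1,i+d}(M,\Z_{(\ell)})$ by the bound $p-q>\dim X$ (your spectral-sequence derivation is the same fact the paper cites from \cite[3.6]{mvw}), identify $H^{i,i}(\X,\Z_{(\ell)})$ with $K^M_i(k)_{(\ell)}$ via Lemma \ref{ovv2.2} and Nesterenko--Suslin--Totaro, replace the $M$-term by $\bar H_{-i,-i}(X)_{(\ell)}$ using the duality from Proposition \ref{H-n-n} together with the summand structure of Theorem \ref{thm:M}, and obtain the integral statement from the degree-$\ell$ point forcing the cokernel of $\pi_*$ to have exponent $\ell$. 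The only differences are expository and in your favor: your Bockstein upgrade of Lemma \ref{ovv2.2} to $\Z_{(\ell)}$-coefficients (which in fact yields vanishing for $p\le q+1$, the range actually needed, not just $p\le q$ as you state) and the explicit B\'ezout step make precise details the paper leaves implicit, while the identification of $(y^D)^*$ with $\pi_*$, which you flag and grant, is treated with the same brevity in the paper's own proof.
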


\begin{proof}
Let $M$, $D$ and $\LL$ be as in Theorem \ref{thm:M}. 
Since $H^{p,q}(M[1]) = H^{p-1,q}(M)$, applying
$H^{i+2d+1,i+d}(-,\Z_{(\ell)})$ to the distinguished triangle in
\eqref{3.4.2} gives us the exact sequence
  \[\makeshort{
  H^{i+2d,i+d}(M,\Z_{(\ell)}) \map{} H^{i+2d,i+d}(\X\oo\LL^d) \map{r^*}
    H^{i+2d+1,i+d}(D,\Z_{(\ell)}) \map{u^*} H^{i+2d+1,i+d}(M,\Z_{(\ell)})
  }\]
where for brevity we have written $H^{p,q}(\X\oo\LL^d)$ for
$\Hom_{\DM}(\Rtr(\X)\oo\LL^d,\Z_{(\ell)}(q)[p])$. 
We will show that this may be rewritten as the 4-term sequence
of the proposition. 

The last term $H^{i+2d+1,i+d}(M,\Z_{(\ell)})$ vanishes because $M$ is
a direct summand of $X$, and $H^{p,q}(X,\Z_{(\ell)})=0$ whenever
$p-q>\dim(X)$; see \cite[3.6]{mvw}.  Similarly, the first term,
$H^{i+2d,i+d}(M,\Z_{(\ell)})$, is a summand of
$H^{i+2d,i+d}(X,\Z_{(\ell)})$, which we showed to be isomorphic to
$H_{-i,-i}(X,\Z_{(\ell)})$ if $i\ge0$, in the proof of Proposition
\ref{H-n-n}.  Therefore we may replace the first term by
$H_{-i,-i}(X,\Z_{(\ell)})$.
Since $X\!\rto\!\Spec(k)$ factors through $\X$, the map
$\pi_*:H_{-i,-i}(X,\Z_{(\ell)})\rto H_{-i,-i}(k,\Z_{(\ell)})=K^M_i(k)_{(\ell)}$
factors through $\bar H_{-i,-i}(X,\Z_{(\ell)})$, the
coequalizer of the two projections from $H_{-i,-i}(X\times X,\Z_{(\ell)})$.
%
We also know that
  \begin{align*}
H^{i+2d,i+d}(\X\oo\LL^d) &= \Hom_{\DM}(\X\oo\LL^d,\Z_{(\ell)}(i+d)[i+2d])
   = \Hom_{\DM}(\X, \Z_{(\ell)}(i)[i])  \\
   & = H^{i,i}(\X,\Z_{(\ell)}) \cong H^{i,i}(\Spec k,\Z_{(\ell)})
   \cong K_i^M(k)\oo\Z_{(\ell)} = K_i^M(k)_{(\ell)},
  \end{align*}
where the last two isomorphisms follow from Lemma \ref{ovv2.2}
and the Nestorenko-Suslin-Totaro Theorem \cite[5.1]{mvw}.
Thus we have constructed an exact sequence
\[\makeshort{
  \bar H_{-i,-i}(X,\Z_{(\ell)}) \map{\pi_*} K_i^M(k)_{(\ell)} \map{r^*}
  H^{i+2d+1,i+d}(D,\Z_{(\ell)}) \rto 0.
}\]

When $X$ has a point $x$ of degree $\ell$ over $k$,
every element $\alpha$ of $K^M_i(k)$ has $\ell\,\alpha=\pi_*([x,\alpha])$,
so the cokernel of $\pi_*:H_{-i,-i}(X)\rto H_{-i,-i}(k)=K^M_i(k)$ has
exponent $\ell$, and is the same as the cokernel of
$\bar{H}_{-i,-i}(X,\Z_{(\ell)}) \rto K_i^M(k)_{(\ell)}$. Thus we can replace
the first two terms of the exact sequence with these 
to get the desired sequence.
\end{proof}


\begin{cor}\label{induced.map}
If $\mu_\ell\subset k^\times$, there are maps 
$\alpha_i: H^{i+2d+1,i+d}(D,\Z_{(\ell)})\rto H^{n+i,n+i-1}(\grX,\Z/\ell)$
for all $i$ so that $\a\cup: K^M_{i}(k)/\ell\rto K^M_{n+i}(k)/\ell$
(the cup product with $\a$) factors as
\[
K^M_{i}(k)/\ell \rfib^{r^*} H^{i+2d+1,i+d}(D,\Z_{(\ell)})\map{\alpha_i} 
H^{n+i,n+i-1}(\grX,\Z/\ell) \rcofib^{\zeta} K^M_{n+i}(k)/\ell.
\]
\end{cor}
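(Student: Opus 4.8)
The plan is to exhibit a single cohomology class $\tilde\a$ on $\X$ that lifts $\a$, to realize $\alpha_i$ as a descent of cup product with $\tilde\a$, and then to read off the factorization from the identity $\zeta(\tilde\a)=\a$. First I would build the lift. Since $X$ is a splitting variety, $\a$ vanishes in $K^M_n(k(X))/\ell$; by Remark~\ref{rem:11.1} the map $K^M_n(k)/\ell\to H^0(\X,\H^n(\muell{n}))$ appearing in Proposition~\ref{prop:right-seq} is restriction followed by an injection into $K^M_n(k(X))/\ell$, so $\a$ lies in its kernel. Exactness in Proposition~\ref{prop:right-seq}, taken at $q=n$, identifies that kernel with the image of the injection $\zeta\colon H^{n,n-1}(\X,\Z/\ell)\to K^M_n(k)/\ell$; hence there is a unique class $\tilde\a\in H^{n,n-1}(\X,\Z/\ell)$ with $\zeta(\tilde\a)=\a$.

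Next I would identify $H^{i,i}(\X,\Z/\ell)$ with $K^M_i(k)/\ell$, using Lemma~\ref{ovv2.2} at $p=q=i$ together with \cite[5.1]{mvw}, and consider cup product with $\tilde\a$, a map $K^M_i(k)/\ell\to H^{n+i,n+i-1}(\X,\Z/\ell)$. Because $\zeta$ is multiplication by the Bott element and $\zeta(\tilde\a)=\a$, the ring structure on $H^{*,*}(\X,\Z/\ell)$ gives $\zeta(\tilde\a\cup x)=\a\cup x$ for every $x$; that is, $\zeta$ composed with cup product by $\tilde\a$ equals cup product with $\a$ as maps $K^M_i(k)/\ell\to K^M_{n+i}(k)/\ell$.

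It then remains to descend cup-by-$\tilde\a$ along $r^*$. By Proposition~\ref{prop:left-seq}, using the degree-$\ell$ point, $r^*\colon K^M_i(k)/\ell\to H^{i+2d+1,i+d}(D,\Z_{(\ell)})$ is surjective with kernel the image of $\pi_*$, so it suffices to show that cup-by-$\tilde\a$ kills $\operatorname{im}(\pi_*)$. Since $\zeta$ is injective and $\zeta\circ(\text{cup-by-}\tilde\a)=\a\cup$, this reduces to the identity $\a\cup\circ\,\pi_*=0$, and this vanishing is where I expect the main obstacle to lie, since it is the only step that is not a formal diagram chase. For a closed point $x\in X$ and $\alpha\in K^M_i(k(x))$ we have $\pi_*[x,\alpha]=N_{k(x)/k}(\alpha)$ by Example~\ref{ex:N=norm}, so the projection formula gives $\a\cup\pi_*[x,\alpha]=N_{k(x)/k}\big(\mathrm{res}_{k(x)/k}(\a)\cup\alpha\big)$, and one must verify $\mathrm{res}_{k(x)/k}(\a)=0$. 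This holds because $X$ is a splitting variety with a $k(x)$-point: the pullback of $\a$ to the smooth variety $X$ vanishes at the generic point, hence vanishes in $K^M_n$ of the local ring at $x$ by Gersten injectivity, and therefore restricts to $0$ in $K^M_n(k(x))/\ell$. Granting this, cup-by-$\tilde\a$ factors uniquely through $r^*$, producing $\alpha_i\colon H^{i+2d+1,i+d}(D,\Z_{(\ell)})\to H^{n+i,n+i-1}(\X,\Z/\ell)$ with $\alpha_i\circ r^*=\text{cup-by-}\tilde\a$; composing with $\zeta$ then yields $\zeta\circ\alpha_i\circ r^*=\a\cup$, which is exactly the claimed factorization. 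Everything outside the displayed vanishing is a diagram chase combining Propositions~\ref{prop:right-seq} and~\ref{prop:left-seq} once the lift $\tilde\a$ is in hand.
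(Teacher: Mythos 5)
Your proposal is correct and takes essentially the same route as the paper: the same three ingredients (the exact sequences of Propositions~\ref{prop:right-seq} and~\ref{prop:left-seq}, the projection formula together with the vanishing of $\a$ over the residue fields of closed points of $X$, and the splitting of $\a$ over $k(X)$) carry the whole argument. The only reorganization is that you construct the lift of $\a$ (the paper's class $\delta\in H^{n,n-1}(\grX,\Z/\ell)$, which the paper introduces only later, before Proposition~\ref{rsQ square}) at the outset and define $\alpha_i$ by descending $\delta\cup-$ along $r^*$, whereas the paper's proof of the corollary produces $\alpha_i$ directly as the map induced between the quotient $H^{i+2d+1,i+d}(D,\Z_{(\ell)})$ and the subgroup $H^{q,q-1}(\grX,\Z/\ell)$; by the injectivity of $\zeta$ the two constructions give the same map, and your version has the minor benefit of making the identity $\alpha_i\circ r^*=\delta\cup-$ (used in Proposition~\ref{rsQ square}) explicit from the start.
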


\begin{proof}
Set $q=n+i$. For each closed point $x$ of $X$, the diagram
\begin{diagram}
   { K^M_i(k(x))/\ell  & K^M_i(k)/\ell  \\
     K^M_q(k(x))/\ell  & K^M_q(k)/\ell  \\};
    \arrowsquare{N}{\a\cup=0}{\a\cup}{N}
\end{diagram}
commutes by the projection formula \cite[III.7.5.2]{WK}. Thus the map
$H_{-i.-i}(X)\rto K^M_q(k)/\ell$ is zero, since by
Proposition \ref{H-n-n} it is induced by the maps 
$$K^M_i(k(x))/\ell \map{N} K^M_i(k)/\ell \map{\a\cup}K^M_{q}(k)/\ell.$$
By Proposition \ref{prop:left-seq}, the cup product
factors through the quotient $H^{i+2d+1,i+d}(D,\Z_{(\ell)})$ of $K^M_i(k)/\ell$.
It remains to show that the image $\a K^M_i(k)$ of the cup product 
lands in the subgroup $H^{q,q-1}(\grX,\Z/\ell)$ of $K^M_q(k)/\ell$.
Since $H^0(X,\H^q(\muell{q}))$ is a subgroup of $K^M_{q}(k(X))/\ell$
(by Remark \ref{rem:11.1}),
it suffices by Proposition \ref{prop:right-seq} to show that 
$\a K^M_i(k)$ vanishes in $K^M_{q}(k(X))/\ell$. 
This is so because $k(X)$ splits $\a$.
\end{proof}




In Corollary \ref{cor:Q-surj},
we will show that the map $\alpha_i$ 
is an isomorphism. 
The inverse of $\alpha_i$ will be constructed 
using the cohomology operations $Q_i$ constructed in \cite[p.\,51]{RPO}.
Each $Q_i$ has bidegree $(2\ell^i-1,\ell^i-1)$;
see {\it loc.\,cit.} or \cite[13.3]{HW} for a summary of their properties. 
Thus the composite $Q=Q_{n-1}Q_{n-2}\cdots Q_0$ has bidegree 
$(2b\ell-n+2,b\ell-n+1)$, where $b=d/(\ell-1)$.

\begin{defn}
Define the $\Z$-graded ring $\HH^*(k)$
by
\[
\HH^i(-) = \bigoplus_{s\in \Z} H^{i+s,s}(-,\Z/\ell).
\]
In particular, $\HH^0(k)\cong K^M_*(k)/\ell$.
The cohomology operation $Q$ maps $\HH^i(Y)$ to $\HH^{i+b\ell+1}(Y)$.  Note
that $\HH^i(\tilde\X)=0$ for $i\le1$, by Lemma \ref{ovv2.2}.
\end{defn}

%


Now the operations $Q_j$ vanish on each $K^M_p(k)/\ell=H^{p,p}(k,\Z/\ell)$,
because $H^{p,q}(k,\Z/\ell)=0$ for $p>q$.
Since the $Q_j$ are derivations (\cite[13.10]{HW}), this means that $\HH^*(Y)$
is a graded $K^M_*(k)/\ell$-module for each $Y$, and each
$Q_j:\HH^i(Y)\map{}\HH^{i+\ell^j}(Y)$ is a 
$K^M_*(k)/\ell$-module homomorphism. Thus $Q:\HH^i \rto \HH^{i+b\ell+1}$
is also a $K^M_*(k)/\ell$-module homomorphism.


\begin{lem} \label{lem:Q-inj} 
Let $X$ be a norm variety over a field of characteristic~0, 
and let $\X$ be its 0-coskeleton.
Then the map
  $Q:\HH^1(\X)\rto\HH^{b\ell+2}(\X)$
is an injection.  
\end{lem}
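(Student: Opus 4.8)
The plan is to remove from $\HH^*(\X)$ the contribution of the ground field by passing to the simplicial cone $\tilde\X$, where the statement becomes a claim about the \emph{lowest} nonvanishing degree, and then to read off injectivity from the module structure of $\HH^*(\tilde\X)$ over the Milnor operations.

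First I would establish a natural, $Q$-equivariant isomorphism $\HH^1(\X)\cong\HH^2(\tilde\X)$. Applying $\Hom_{\DM}(-,\Z/\ell(s)[1+s])$ to the triangle $\X\rto\Spec k\rto\tilde\X$ defining $\tilde\X$ gives, for each weight $s$, an exact sequence
\[
H^{1+s,s}(k,\Z/\ell)\rto H^{1+s,s}(\X,\Z/\ell)\map{\delta} H^{2+s,s}(\tilde\X,\Z/\ell)\rto H^{2+s,s}(k,\Z/\ell).
\]
The two outer groups vanish because $H^{p,q}(k,\Z/\ell)=0$ whenever $p>q$, so the connecting map $\delta$ is an isomorphism; summing over $s$ yields $\HH^1(\X)\cong\HH^2(\tilde\X)$. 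Since each $Q_j$ is a cohomology operation, i.e.\ a morphism $\Z/\ell(\ast)\rto\Z/\ell(\ast)[\ast]$ in $\DM$, it commutes with the connecting maps of this triangle, hence $Q$ commutes with $\delta$ up to sign. It therefore suffices to prove that $Q:\HH^2(\tilde\X)\rto\HH^{b\ell+3}(\tilde\X)$ is injective.

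Next I would use the description of $\HH^*(\tilde\X,\Z/\ell)$ for the norm variety $X$ (a $\nu_{n-1}$-variety, since $s_d(X)\not\equiv0\pmod{\ell^2}$) furnished by Theorem~\ref{thm:M} and worked out by Voevodsky in Chapter~13 of \cite{HW}: in the relevant range $\HH^*(\tilde\X)$ is free over the exterior algebra $\Lambda=\Lambda(Q_0,\dots,Q_{n-1})$ generated by the derivations $Q_j$. Granting this, the conclusion is formal. By Lemma~\ref{ovv2.2} we have $\HH^i(\tilde\X)=0$ for $i\le1$, and each $Q_j$ raises the $\HH$-degree by $\ell^j\ge1$; consequently no nonzero element of the bottom group $\HH^2(\tilde\X)$ can lie in $\Lambda^+\cdot\HH^*(\tilde\X)$, so every element of $\HH^2(\tilde\X)$ is a free $\Lambda$-module generator. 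The operation $Q=Q_{n-1}\cdots Q_0$ is the product of all the generators of $\Lambda$, and on a free $\Lambda$-module it carries the free generators to a linearly independent family; thus $Q$ is injective on $\HH^2(\tilde\X)$, as required.

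The crux---and the step I would quote rather than reprove---is this structural input: that $Q$ does not annihilate the bottom classes of $\tilde\X$, equivalently that $\HH^*(\tilde\X)$ is $\Lambda$-free near the bottom. This is exactly where the hypothesis that $X$ is a norm variety in characteristic~$0$ enters, through the nonvanishing $s_d(X)\not\equiv0\pmod{\ell^2}$ and the resulting decomposition of Theorem~\ref{thm:M}: the symmetric powers $S^{\ell-1}(A)$ and $S^{\ell-2}(A)$ underlying $M$ and $D$ are glued by the operations $Q_j$, and nonvanishing of the characteristic number $s_d$ guarantees that the composite $Q$ realizes the full passage from the bottom Tate summand to the top one without degenerating. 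Reconstructing this from \eqref{3.4.1}--\eqref{3.4.2} would amount to redoing Voevodsky's analysis, so I would cite it.
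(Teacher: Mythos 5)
Your reduction to $\tilde\X$ is exactly the paper's first step: the isomorphism $\HH^i(\X)\cong\HH^{i+1}(\tilde\X)$ for $i>0$ from the defining triangle together with the vanishing of $H^{p,q}(k,\Z/\ell)$ for $p>q$, and the $Q$-equivariance is unproblematic since the $Q_j$ are bistable operations. The gap is in the second half. The structural input you propose to quote --- that $\HH^*(\tilde\X)$ is free over the exterior algebra $\Lambda(Q_0,\dots,Q_{n-1})$ ``in the relevant range'' --- is not a statement available in the sources you point to, and as a global statement it is false. What \cite[3.2]{mc/2} (or \cite[10.14]{HW}) and \cite[13.20]{HW} actually provide is weaker and degree-specific: for each single $j<n$ the Margolis sequence $\HH^{*-\ell^j}(\tilde\X)\map{Q_j}\HH^{*}(\tilde\X)\map{Q_j}\HH^{*+\ell^j}(\tilde\X)$ is exact in a suitable range of bidegrees, i.e.\ the individual Margolis homologies vanish there. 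Vanishing of the single-$Q_j$ Margolis homologies only in a range does not yield freeness over the full exterior algebra, even near the bottom (an Adams--Margolis-type criterion would need vanishing in all degrees for a bounded-below module of finite type); so your ``formal'' step, which needs the bottom classes of $\HH^2(\tilde\X)$ to be free $\Lambda$-generators, is in effect assuming the conclusion that $Q$ does not kill them.

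The honest route from the available input is the paper's: writing $a(j)=2+\frac{\ell^j-1}{\ell-1}$, the partial composite $Q_{j-1}\cdots Q_0$ carries $\HH^2(\tilde\X)$ into $\HH^{a(j)}(\tilde\X)$, so injectivity of $Q$ follows from injectivity of each $Q_j$ on $\HH^{a(j)}(\tilde\X)$; by exactness of the Margolis sequence this reduces to the vanishing of the incoming term $\HH^{a(j)-\ell^j}(\tilde\X)$, which holds by Lemma \ref{ovv2.2} because $a(j)-\ell^j\le1$. That inequality is the actual content which your phrase ``in the relevant range'' elides; without it the intermediate degrees through which $Q$ passes are uncontrolled. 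A secondary misstep: you locate the crux in Theorem \ref{thm:M} and the triangles \eqref{3.4.1}--\eqref{3.4.2}, but the logic runs the other way --- the Margolis exactness is proved directly from the $\nu_{n-1}$-property ($s_d(X)\not\equiv0\bmod{\ell^2}$) and is an input to the construction of the Rost motive, not a consequence of it; indeed the paper's proof of Lemma \ref{lem:Q-inj} does not invoke Theorem \ref{thm:M} at all.
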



\begin{proof}
Since $H^{p,q}(\Spec k,\Z/\ell) = 0$ for $p>q$, we have 
$\HH^i(\Spec k)=0$ for $i>0$. This yields isomorphisms
$\HH^i(\X)\map{\cong}\HH^{i+1}(\tilde\X)$ for all $i>0$.
In particular, $\HH^1(\X) \cong \HH^2(\tilde\X)$.  Thus it
suffices to show that $Q$ is injective on $\HH^2(\tilde\X)$.
Setting $a(j)={2+\frac{\ell^j-1}{\ell-1}}$, $Q_{j-1}\cdots Q_0$
maps $\HH^2(\tilde\X)$ to $\HH^{a(j)}(\tilde\X)$.
In particular it suffices to show that $Q_j$ is injective on
$\HH^{a(j)}(\tilde\X)$ for all $0\le j\le n-1$.
%
Because $X$ is a norm variety, we know from \cite[3.2]{mc/2}
(or \cite[10.14]{HW}) and \cite[13.20]{HW} that
the Margolis sequence is exact for each $Q_j$, $j<n$:
\[
\HH^{a(j)-\ell^j}(\tilde\X) \map{Q_j} \HH^{a(j)}(\tilde\X) \map{Q_j} 
\HH^{a(j)+\ell^j}(\tilde\X).
\]
By Lemma \ref{ovv2.2}, 
the left term is zero because
$a(j)-\ell^j\le1$. The result follows.
\end{proof}

Since $X$ is a splitting variety, 
$\a$ vanishes in $K^M_n(k(X))/\ell$. By Remark \ref{rem:11.1},
$\a$ vanishes in $H^0(X,\H^n(\muell{n}))$. It follows from
Proposition \ref{prop:right-seq} (or \cite[6.5]{mc/l}) 
that there is a unique element $\delta$ in $H^{n,n-1}(\grX,\Z/\ell)$
whose image in $K^M_n(k)/\ell$ is $\a$.

In the following proposition, $\zeta$ is the map defined in
Proposition \ref{prop:right-seq}, $\alpha$ is the 
direct sum of the maps $\alpha_i$ defined 
in Corollary \ref{induced.map}, and the maps $r^*$, $s^*$
are given in Theorem \ref{thm:M}. 

\goodbreak
\begin{prop} \label{rsQ square}
If $s_d(X)\not\equiv0\pmod{\ell^2}$, 
the following diagram commutes up to sign, and
the top composite is multiplication by $\a$.
  \begin{diagram}
    { \HH^0(\X) & \HH^1(\X) & K^M_*(k)/\ell \\
      \HH^{d+1}(D) & \HH^{b\ell+2}(\X) \\};
    \arrowsquare{\delta\cup}{r^*}{Q}{s^*} 
    \diagArrow{right hook->}{1-2}{1-3}^{\zeta} 
     \to{2-1}{1-2}^{\alpha}
  \end{diagram}
\end{prop}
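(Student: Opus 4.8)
The plan is to exploit that every map in the diagram is a homomorphism of $K^M_*(k)/\ell$-modules. The cup products and the maps $r^*,s^*$ (induced by the morphisms $r,s$ of Theorem~\ref{thm:M}) are module maps by naturality, and $Q=Q_{n-1}\cdots Q_0$ is a module map because each $Q_j$ is a derivation vanishing on $\HH^0(k)=K^M_*(k)/\ell$, as recorded just before Lemma~\ref{lem:Q-inj}. Since $\HH^0(\X)\cong\HH^0(k)=K^M_*(k)/\ell$ is free of rank one as a graded module over itself, generated by the unit $1\in H^{0,0}(\X)$, the commutativity of the square reduces to the single identity $Q(\delta)=\pm\,s^*r^*(1)$ in $\HH^{b\ell+2}(\X)$, using $\delta\cup1=\delta$. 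A uniform sign then works for the whole square.

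I would first dispose of the top composite. The map $\zeta$ of Proposition~\ref{prop:right-seq} is cup product with the fixed root of unity (a class of bidegree $(0,1)$), hence a homomorphism of graded rings. As $\delta$ was defined to be the unique class in $H^{n,n-1}(\X,\Z/\ell)$ with $\zeta(\delta)=\a$, for every $\beta\in\HH^0(\X)$ we get $\zeta(\delta\cup\beta)=\zeta(\delta)\cup\beta=\a\cup\beta$; thus the top composite is multiplication by $\a$, as asserted. Comparing this with the factorization of $\a\cup$ in Corollary~\ref{induced.map} and using that $\zeta$ is injective yields the upper triangle $\delta\cup-\,=\alpha\circ r^*$ for free. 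Once the square is known, the lower triangle $Q\circ\alpha=\pm s^*$ follows as well, since $r^*$ is surjective by Proposition~\ref{prop:left-seq}.

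The real content is the identity $Q(\delta)=\pm\,s^*r^*(1)$. Here $r^*(1)\in\HH^{d+1}(D)$ is the image of $1$ under the map $r^*$ induced by the connecting morphism $r$ of the triangle~\eqref{3.4.2}, and $s^*$ is induced by the connecting morphism $s$ of~\eqref{3.4.1}; the twist bookkeeping $b+d=b\ell$ and $d=b(\ell-1)$ is exactly what makes both $Q(\delta)$ and $s^*r^*(1)$ land in $H^{2b\ell+2,\,b\ell}(\X,\Z/\ell)$. I would establish the identity by unwinding the construction $M=S^{\ell-1}(A)$, $D=S^{\ell-2}(A)$ underlying Theorem~\ref{thm:M}: the Milnor operations $Q_j$ are precisely the cohomological shadow of the motivic boundary maps in the tower of symmetric powers built from $\a$, so that the full composite $Q$ realizes the composite of the (twisted) connecting maps $s$ and $r$ on the canonical class. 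This is the computation carried out for the Rost motive in \cite[Ch.\,5, Ch.\,13]{HW}, following Voevodsky \cite{mc/l} and the operations of \cite{RPO}; the hypothesis $s_d(X)\not\equiv0\pmod{\ell^2}$ is what forces the relevant Margolis homology to be exact (as in the proof of Lemma~\ref{lem:Q-inj}), so that the $Q_j$ genuinely detect these boundary maps.

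I expect this last identification to be the main obstacle. The top composite and the formal triangle-chasing are routine, but matching the iterated operation $Q$ with the two motivic connecting maps $s^*,r^*$ requires a careful comparison of the construction of $M$ and $D$ with the action of the $Q_j$, and this is also where the unspecified sign in ``commutes up to sign'' originates.
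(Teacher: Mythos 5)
Your formal reductions match the paper's proof exactly: the observation that every map is a $K^M_*(k)/\ell$-module homomorphism, the reduction to the single identity $s^*r^*(1)=\pm Q(\delta)$, the treatment of the top composite and the upper triangle via injectivity of $\zeta$, and the recovery of the lower triangle from surjectivity of $r^*$ are all present in the paper. But the identity $s^*r^*(1)=(-1)^{n-1}Q(\delta)$ is the entire content of the proposition, and at exactly this point --- which you yourself flag as the main obstacle --- your sketch both omits the actual mechanism and proposes an incorrect one. The paper's argument is a specific three-step chain. First, recall Voevodsky's construction of $\phi_V(\mu)$: given $\mu\in H^{2b+1,b}(\grX,\Z/\ell)$, form the triangle $A\rto\grX\map{\mu}\grX(b)[2b+1]$, set $S=S^{\ell-2}A$, and let $\phi_V(\mu)\in H^{2b\ell+2,b\ell}(\grX,\Z/\ell)$ be the class represented by the composite $\Rtr(\grX)\map{s}S(b)[2b+1]\map{r\oo1}\Rtr(\grX)(b\ell)[b\ell+2]$. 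Taking $\mu=Q_{n-2}\cdots Q_0(\delta)$ reproduces the triangles \eqref{3.4.1} and \eqref{3.4.2} with $D=S$, so $s^*\circ r^*$ is multiplication by $\phi_V(\mu)$. Second, $\phi_V$ agrees with $\beta P^b$ by \cite[Thm.\,3.8]{mc/l} (cf.\ \cite[6.33]{HW}). Third, since $\mu$ is annihilated by $Q_i$ for $i\le n-2$, the Milnor-algebra identity $\beta P^b(\mu)=(-1)^{n-1}Q_{n-1}(\mu)$ (\cite[p.\,427]{mc/l}, \cite[5.14]{HW}) gives $s^*r^*(1)=(-1)^{n-1}Q_{n-1}(\mu)=(-1)^{n-1}Q(\delta)$.

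This chain reveals a division of labor that your narrative gets wrong: the lower operations $Q_{n-2},\dots,Q_0$ enter only through the class $\mu$ used to \emph{build} the motive $D$, while the composite $s^*r^*$ realizes only the single operation $\beta P^b$, which is then converted into the top operation $Q_{n-1}$ by a purely formal identity in the motivic Steenrod algebra valid on any class killed by the lower $Q_i$. It is not the case that ``the full composite $Q$ realizes the composite of the connecting maps.'' Moreover, Margolis exactness plays no role in this proposition: it is the input to Lemma \ref{lem:Q-inj} (injectivity of $Q$ on $\HH^1(\X)$), which is used afterwards in Corollary \ref{cor:Q-surj}. The hypothesis $s_d(X)\not\equiv0\pmod{\ell^2}$ enters here only through Theorem \ref{thm:M}, i.e., through the existence of the symmetric motive $M=(X,e)$ and the two distinguished triangles, not by ``forcing the $Q_j$ to detect boundary maps.'' As written, your proposed route for the key identity would not assemble into a proof.
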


\begin{proof}
Note that 
all maps in the diagram are (right) module maps
over the ring $K^M(k)/\ell\cong\HH^0(\X)$.
This is clear for multiplication by $\delta$, and we have already
seen that the cohomology operation $Q$ is also a $\HH^0(\X)$-module 
map. Finally, the maps $r^*$ and $s^*$ are also $\HH^0(\X)$-module maps, since
they come from morphisms in $\DM$; see \eqref{3.4.1} and \eqref{3.4.2}. 
 
The top row sends $x\in\HH^0(\X)$ to $\zeta(\delta\cup x)=\a\cup x$;
since $\zeta$ is an injection (by Proposition \ref{prop:right-seq}), 
and $\a\cup x=\zeta\circ\alpha^*r^*(x)$,
the upper triangle commutes: $\delta\cup x=\alpha^*r^*(x)$.

We will show that $s^*r^*(1) = (-1)^{n-1} Q(\delta)$. 
By linearity, it will follow that $s^*r^*(x)=(-1)^{n-1} Q(\delta\cup x)$
for all $x\in \HH^0(\X)$. Since $r^*$ is surjective by
Proposition \ref{prop:left-seq}, the result will follow.

We need to recall the definition of $\phi_V(\mu)$ from 
\cite[p.\,413]{mc/l} and \cite[5.10]{HW}.
Given an element $\mu$ in $H^{2b+1,b}(\grX,\Z/\ell)$,
form the triangle $A\rto\grX\map{\mu}\grX(b)[2b+1]$ and set 
$S=S^{\ell-2}A$. Then $\phi_V(\mu)$ is defined to be the element 
of $H^{2b\ell+2,b\ell}(\grX,\Z/\ell)$ represented by the composition
\[
\Rtr(\grX) \map{s} S(b)[2b+1] \map{r\oo1} \Rtr(\grX)(b\ell)[b\ell+2].
\]
When $\mu=Q_{n-2}\cdots Q_0(\delta)$, we get the distinguished
triangles \eqref{3.4.1} and \eqref{3.4.2} with $D=S$. Thus
the composition $s^*\circ r^*$ in the above diagram is multiplication 
by the element $\phi^V(\mu)$.
As observed in {\it loc.\,cit.}
By \cite[Thm.\,3.8]{mc/l} (cf.\,\cite[Cor.\,6.33]{HW}),
$\phi^V$ agrees with $\beta P^b$. 
In addition, since $\mu$ is annihilated by the $Q_i$ with $i\le n-2$ 
we have $\beta P^b(\mu)=(-1)^{n-1}Q_{n-1}(\mu)$; see
\cite[p.\,427]{mc/l} or \cite[5.14]{HW}.
This shows that the bottom right triangle commutes in the above diagram.
\end{proof}

\begin{remm}
In the proof of Proposition \ref{rsQ square}, we have cited
Definition 5.10, Corollary 6.33 and Lemma 5.14 from the book \cite{HW}.
These are slightly improved versions of Lemma 3.2 and (5.2), 
Theorem 3.8 and Lemma 5.13 in Voevodsky's paper \cite{mc/l}. 
Note that \cite[5.13]{mc/l} is missing several minus signs.
\end{remm}


\begin{cor} \label{cor:Q-surj}
In Proposition \ref{rsQ square},
$Q$ and $\alpha$ are isomorphisms, and the maps
$r^*$ and $\delta\cup-$ are surjections.
\end{cor}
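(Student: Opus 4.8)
The plan is to reduce everything to one structural fact---that the bottom arrow $s^*$ of the square in Proposition~\ref{rsQ square} is an isomorphism---and then finish by a purely formal chase. Two of the four maps are already under control: $r^*$ is surjective by Proposition~\ref{prop:left-seq}, and $Q$ is injective by Lemma~\ref{lem:Q-inj}. The upper triangle of the diagram records the identity $\delta\cup- = \alpha\circ r^*$, while sign-commutativity of the square gives
\[
s^*\circ r^* = \pm\,Q\circ(\delta\cup-) = \pm\,Q\circ\alpha\circ r^*.
\]
Since $r^*$ is surjective, I may cancel it on the right to obtain the clean relation $s^* = \pm\,Q\,\alpha$.

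First I would show that $s^*\colon \HH^{d+1}(D)\to\HH^{b\ell+2}(\X)$ is an isomorphism. It is induced by the morphism $s\colon \Rtr(\X)\to D\oo\LL^b[1]$ of the distinguished triangle~\eqref{3.4.1}, so applying $\Hom_{\DM}(-,\Z/\ell(q)[p])$ to that triangle yields a long exact sequence in which $s^*$ is flanked by two motivic cohomology groups of $M$. Tracking the internal grading, the bidegrees $(p,q)$ indexing $\HH^{b\ell+2}(\X)$ satisfy $p-q=b\ell+2$, and the two neighbouring $M$-terms occur in bidegrees whose weights $p-q$ equal $b\ell+2$ and $b\ell+1$. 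As $n\ge2$ forces $b\ge1$, both of these exceed $d=\dim X$; since $M$ is a direct summand of $X$ and $H^{p,q}(X,\Z/\ell)=0$ for $p-q>\dim X$ (the same vanishing invoked in Proposition~\ref{prop:left-seq}), both flanking terms vanish and $s^*$ is an isomorphism in every relevant bidegree.

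The remainder is formal. With $s^*=\pm\,Q\,\alpha$ now an isomorphism and $Q$ injective: injectivity of $s^*$ forces $\alpha$ to be injective, surjectivity of $s^*$ forces $Q$ to be surjective, so $Q$ is an isomorphism; then $\alpha=\pm\,Q^{-1}s^*$ is an isomorphism as well. Finally $\delta\cup-=\alpha\circ r^*$ is the composite of an isomorphism with the surjection $r^*$, hence itself surjective, and $r^*$ is surjective by hypothesis, giving all four conclusions. I expect the main obstacle to be concentrated entirely in the middle step---matching the output bidegrees of triangle~\eqref{3.4.1} against the internal grading of $\HH$, and confirming the mod-$\ell$ vanishing of $H^{p,q}(M)$ in precisely the two bidegrees flanking $s^*$; once $s^*$ is known to be an isomorphism, the statement drops out mechanically.
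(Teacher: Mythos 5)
Your proof is correct and takes essentially the same route as the paper: $r^*$ is surjective by Proposition~\ref{prop:left-seq}, $Q$ is injective by Lemma~\ref{lem:Q-inj}, $s^*$ is an isomorphism, and the four conclusions then follow by the formal chase (the paper even records the relation $s^*=\pm Q\alpha$ directly as the bottom-right triangle of Proposition~\ref{rsQ square}, so your cancellation of $r^*$ is a harmless re-derivation). The one place you diverge is that the paper simply cites \cite[4.16]{HW} for ``$s^*$ is an isomorphism (because $d+1>d$),'' whereas you prove it from the long exact sequence of triangle~\eqref{3.4.1} together with the vanishing $H^{p,q}(M,\Z/\ell)=0$ for $p-q>\dim X$; your bidegree bookkeeping (flanking $M$-terms in weights $b\ell+1=d+b+1>d$ and $b\ell+2>d$) is accurate and is exactly the content of the cited lemma.
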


\begin{proof}
From Proposition~\ref{prop:left-seq}, we see that $r^*$ is surjective.
By \cite[4.16]{HW}, $s^*$ is an isomorphism (because $d+1>d$),
and $Q$ is an injection by Lemma \ref{lem:Q-inj}.
The results follows from a diagram chase.
\end{proof}

Note that $H^{q,q-1}(\grX)=0$ for $q<n$, because by
Corollary \ref{cor:Q-surj} this is a quotient of $H^{q-n,q-n}(\grX)$.
Recall that $\widetilde{K}^M_q(k(x))/\ell$ is the equalizer of the two maps
\[
\iota_1, \iota_2: K^M_{q}(k(X))/\ell \rightrightarrows 
K^M_{q}(k(X\times X))/\ell.
\]

The following result was proved for $n=1$ in Corollary \ref{2->1:n=1},
and will be proved for $n\ge2$ in the next section.

\begin{prop} \label{prop:4thterm}
$H^0(\X, \H^q(\mu_\ell^{\otimes q})) \cong \tilde K^M_q(k(X))/\ell.$
\end{prop}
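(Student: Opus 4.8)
The goal is to identify the equalizer $\widetilde{K}^M_q(k(X))/\ell$ of the two maps $\iota_1,\iota_2\colon K^M_q(k(X))/\ell \rightrightarrows K^M_q(k(X\times X))/\ell$ with the cohomology group $H^0(\X,\H^q(\muell{q}))$. The plan is to leverage Remark \ref{rem:11.1}, which already provides the crucial injectivity: for smooth $X$, the group $H^0(X,\H^q(\muell{q}))$ injects into $\H^q(\muell{q})(\Spec k(X)) = H^q_\et(k(X),\muell{q}) \cong K^M_q(k(X))/\ell$, and more relevantly $H^0(\X,\H^q(\muell{q}))$ does too. By Lemma \ref{H0(Hq)}, $H^0(\X,\H^q)$ is the equalizer of $H^0(X,\H^q) \rightrightarrows H^0(X\times X,\H^q)$. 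So both sides are subgroups of $K^M_q(k(X))/\ell$, and the plan is to show they coincide as subgroups.

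First I would set up the commutative square comparing the local-to-global injections for $X$ and for $X\times X$. By Remark \ref{rem:11.1} applied to both $X$ and $X\times X$ (both smooth), we have injections $H^0(X,\H^q) \hookrightarrow K^M_q(k(X))/\ell$ and $H^0(X\times X,\H^q) \hookrightarrow K^M_q(k(X\times X))/\ell$, compatible with the two restriction/projection maps. The two maps $H^0(X,\H^q)\rightrightarrows H^0(X\times X,\H^q)$ are induced by the two projections $X\times X \to X$, and these are compatible with $\iota_1,\iota_2$ on the generic-fiber $K$-theory. Thus taking equalizers, $H^0(\X,\H^q)$ maps injectively into the equalizer $\widetilde{K}^M_q(k(X))/\ell$ of $\iota_1,\iota_2$. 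This gives one containment essentially for free.

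For the reverse containment I would argue that any class in $\widetilde{K}^M_q(k(X))/\ell$ actually lifts to a global section of $\H^q$ on $\X$. The key input is the Gersten/Bloch--Ogus description: $H^0(X,\H^q(\muell{q}))$ is exactly the subgroup of $K^M_q(k(X))/\ell = H^q_\et(k(X),\muell{q})$ consisting of classes that are unramified along every codimension-one point of $X$, i.e. the kernel of all the residue (tame symbol) maps $\partial_y$. The homotopy invariance and transfer structure of $\H^q(\muell{q})$ (from \cite[24.1, 6.11, 6.21, 13.1]{mvw}) guarantee this Gersten resolution. So I would show that an element $\beta\in K^M_q(k(X))/\ell$ equalized by $\iota_1=\iota_2$ must be unramified on $X$: if it had a nonzero residue $\partial_y(\beta)$ at some codimension-one point $y$ of $X$, then pulling back along the two projections and using the compatibility of residues with flat pullback on $X\times X$ would produce a discrepancy between $\iota_1(\beta)$ and $\iota_2(\beta)$, contradicting that $\beta$ lies in the equalizer. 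Hence $\beta$ is unramified, so $\beta\in H^0(X,\H^q)$, and being in the equalizer places it in $H^0(\X,\H^q)$.

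The main obstacle, I expect, is making the residue-discrepancy argument in the reverse direction fully rigorous: one must track the behavior of the tame symbol under the two projections $X\times X\to X$ and argue that equalizing $\iota_1,\iota_2$ genuinely forces all codimension-one residues on $X$ to vanish, rather than merely some weaker symmetry condition. This requires a careful analysis of the ramification loci and the interplay between the diagonal and the two projections --- in particular that a codimension-one point $y$ of $X$ pulls back to distinguishable divisorial data under the two projections, so a nonzero residue cannot be hidden. Once unramifiedness is established, the identification with $H^0(\X,\H^q)$ follows formally from Lemma \ref{H0(Hq)} and the Gersten description, and combined with the $n=1$ case already handled in Corollary \ref{2->1:n=1}, this would complete the proof for all $n$.
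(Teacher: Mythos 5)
Your overall architecture matches the paper's: Lemma \ref{H0(Hq)} identifies $H^0(\X,\H^q(\muell{q}))$ as the equalizer of $H^0(X,\H^q)\rightrightarrows H^0(X\times X,\H^q)$, the Gersten resolutions on $X$ and $X\times X$ (which give the injectivity in Remark \ref{rem:11.1} and reduce unramifiedness to vanishing of residues, exactly as in the paper's Lemma \ref{lem:Gersten}) reduce everything to showing that a class $\beta$ in the equalizer has $\partial_y(\beta)=0$ for all $y\in X^{(1)}$, and your remark that the two projections send residue data to \emph{distinct} divisorial loci ($y\times X$ versus $X\times y$, distinct once $\dim X\ge 1$, i.e.\ $n\ge2$) is precisely the paper's lemma that on the residue level the equalizer equals $\ker p_1$. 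But the step you defer as ``the main obstacle'' is the entire mathematical content, and your sketch offers no mechanism for it. From $\iota_1(\beta)=\iota_2(\beta)$, together with the fact that $\iota_2(\beta)$ is unramified along $y\times X$ (that divisor dominates the second factor), you can conclude only that the \emph{image} of $\partial_y(\beta)$ under the pullback $K^M_{q-1}(k(y))/\ell \to K^M_{q-1}(k(\eta_y))/\ell$ vanishes, where $\eta_y$ is a generic point of $y\times X$. Mod-$\ell$ Milnor $K$-theory is emphatically not injective under arbitrary field extensions --- $\{a\}$ dies in $k(\root\ell\of{a})$ --- so a nonzero residue can a priori be ``hidden'' by exactly the kind of extension you pass to, and no bookkeeping of ramification loci or of how $y$ pulls back under the two projections will rule this out.

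What is needed, and what the paper supplies, is a retraction showing this particular pullback is injective. Since $X$ is smooth, for an affine chart $\Spec R$ meeting $y$ the ring $k(y)\otimes_k R$ is regular, and its localization at the kernel of the multiplication map $k(y)\otimes R\to k(y)$ is a regular local ring with residue field $k(y)$ and fraction field $k(\eta_y)$; choosing a regular system of parameters and iterating the specialization construction of \cite[III.7.3]{WK} produces a map $\lambda\colon K^M_*(k(\eta_y))\to K^M_*(k(y))$ which is a left inverse to the pullback $p_1^y$. (Geometrically: $y\times X$ is $X_{k(y)}$, which carries the canonical $k(y)$-point coming from the diagonal, and one specializes there.) This is the paper's final proposition, and it is what makes $p_1$ injective, hence $E_1=0$, hence $H^0(\X,\H^q(\muell{q}))\cong\tilde K^M_q(k(X))/\ell$. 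Without this specialization argument (or an equivalent injectivity statement for generic field extensions), your proposal has a genuine gap at its decisive step; the rest of it is a correct reassembly of the paper's formal skeleton.
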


We are now ready to prove Theorem~\ref{thm:2->l} when $n\ge2$.

\begin{proof}[Proof of Theorem~\ref{thm:2->l}] 
Putting Proposition~\ref{prop:right-seq} for $q=n+i$ and
Proposition~\ref{prop:left-seq} together, we get that the
rows are exact in the following diagram, where $H^{p,q}(-)$
denotes $H^{p,q}(-,\Z/\ell)$.
\begin{diagram}
  { & \bar{H}_{-i,-i}(X) & K_i^M(k) & H^{i+2d+1,i+d}(D,\Z_{(\ell)}) \\
H^{q+1,q-1}(\X) & H^0(\X,\H^{q}(\muell{q})) 
&K_q^M(k)/\ell& H^{q,q-1}(\X) \\};
  \to{1-2}{1-3} \fib{1-3}{1-4}^{r^*}
  \diagArrow{left hook->}{2-4}{2-3}^{\zeta} \to{2-3}{2-2} \to{2-2}{2-1}
  \to{1-3}{2-3}^{\a\cup} 
  \diagArrow{densely dashed,->}{1-4}{2-4}^{\alpha}
  \to{1-3}{2-4}^{\delta\cup}
  \draw[densely dotted,->,rounded corners=5pt] 
       (m-1-2.south) -- (m-1-3.south west) -- (m-2-3.north west) 
       -- (m-2-2.north) -- (m-2-1.north east);
\end{diagram}
From Corollary \ref{cor:Q-surj} we can conclude that 
the five-term sequence indicated by the dotted arrow is exact:
\addtocounter{equation}{-1}
\begin{subequations}
\renewcommand{\theequation}{\theparentequation.\arabic{equation}}
\begin{equation}\label{eq:6-term}
  \bar{H}_{-i,-i}(\X) \rto K_i^M(k) \map{\a\cup} K_{q}^M(k)/\ell \rto
  H^0(\X,\H^{q}(\muell{q}))   
\rto H^{q+1,q-1}(\X).
\end{equation}
\end{subequations}
Theorem \ref{thm:2->l} now follows from Proposition~\ref{prop:4thterm}.
\end{proof}

\section{The fourth term}

Let $\iota_1,\iota_2$ be the two inclusions $k(X) \rcofib k(X\times X)$ induced
by the projections $X \times X \rto X$. 
To finish the proof of Theorem \ref{thm:2->l}, we need to 
prove Proposition \ref{prop:4thterm} for $n\ge2$.

\begin{lem}\label{lem:Gersten}
Fix $n\ge2$.  In the commutative diagram
\begin{diagram}
 {
      H^0(\X,\H^q) & H^0(X,\H^q) & H^0(X\times X, \H^q) \\
      E_0 & K^M_q(k(X))/\ell & K^M_q(k(X\times X))/\ell  \\
      E_1 & \bigoplus_{x\in X^{(1)}} K^M_{q-1}(k(x))/\ell & \bigoplus_{y\in
        (X\times X)^{(1)}} K^M_{q-1}(k(y))/\ell \\};
    \cofib{1-1}{1-2} \cofib{2-1}{2-2} \cofib{3-1.mid east}{3-2.mid west}
    \diagArrow{->,shiftup}{1-2}{1-3} \diagArrow{->,shiftdown}{1-2}{1-3}
    \diagArrow{->,shiftup}{2-2}{2-3}^{p_0} \diagArrow{->,shiftdown}{2-2}{2-3}_{p_0'}
    \diagArrow{->,shiftup}{3-2.mid east}{3-3.mid west}^{p_1}
    \diagArrow{->,shiftdown}{3-2.mid east}{3-3.mid west}_{p_1'}
    \cofib{1-1}{2-1} \cofib{1-2}{2-2} \cofib{1-3}{2-3}
    \to{2-1}{3-1} \to{2-2}{3-2} \to{2-3}{3-3}
  \end{diagram}
all of the columns are exact, 
and each $E_i$ is the equalizer of the morphisms $p_i$ and $p'_i$.
\end{lem}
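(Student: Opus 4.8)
The plan is to recognize the two right-hand columns as the initial segments of Gersten resolutions, to read off $E_0$ and $E_1$ directly from the definition of an equalizer, and then to deduce the exactness of the left-hand column formally, using that finite limits commute with finite limits and invoking Lemma \ref{H0(Hq)} to identify the top-left corner.

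First I would treat the two right columns. Since $\H^q=\H^q(\muell{q})$ is a homotopy invariant Nisnevich sheaf with transfers (Remark \ref{rem:11.1} and \cite[24.1]{mvw}), it admits a flasque Gersten resolution on each smooth scheme by \cite[24.11]{mvw}. Exactly as in the proof of Proposition \ref{H-n-n}, this resolution on $X$ has $c$-th term $\bigoplus_{z\in X^{(c)}}K^M_{q-c}(k(z))/\ell$ (after using the norm residue isomorphism to identify the skyscraper values), with degree-$0$ differential the tame symbol; the same holds for $X\times X$. Because this resolution computes the cohomology of $\H^q$, the group $H^0(X,\H^q)$ is the kernel of the tame symbol, and the top map $H^0(X,\H^q)\to K^M_q(k(X))/\ell$ is the injection of Remark \ref{rem:11.1}. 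Hence the middle and right columns are exact. Write $C^\mathdot$ and $D^\mathdot$ for these two Gersten complexes.

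Next, the horizontal maps are induced by the two projections $X\times X\to X$, which are smooth and hence flat; flat pullback is a map of Gersten complexes, so we obtain morphisms of complexes $p,p'\colon C^\mathdot\to D^\mathdot$, and this is precisely why the displayed squares commute. By definition $E_0$ and $E_1$ are the equalizers of $(p_0,p_0')$ and $(p_1,p_1')$, i.e. the kernels of the differences $d_i=p_i-p_i'$; since $d=p-p'$ is again a map of complexes, the $E_i$ assemble into a subcomplex $E^\mathdot=\ker(d)\subseteq C^\mathdot$, whose differential is the restriction of the tame symbol.

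Finally, for the left column a one-line chase gives $\ker(E_0\to E_1)=\{\xi\in C^0:d_0\xi=0\text{ and }\partial_C\xi=0\}$, using only that $d$ is a chain map. This set is exactly the equalizer of the two maps $H^0(X,\H^q)=\ker\partial_C \rrto H^0(X\times X,\H^q)=\ker\partial_D$, which by Lemma \ref{H0(Hq)} is $H^0(\X,\H^q)$; moreover $H^0(\X,\H^q)\to E_0$ is injective, since it factors through $H^0(X,\H^q)\hookrightarrow C^0$ and lands in $E_0$. Thus the left column is exact as well. The one non-formal ingredient is the functoriality of the Gersten resolution for the smooth projections --- that flat pullback commutes with the tame symbol, so that the squares genuinely commute --- and this is where I expect the only real work to lie; it is, however, standard for homotopy invariant sheaves with transfers (equivalently, for Rost's cycle modules), and everything else reduces to the observation that kernels commute with kernels.
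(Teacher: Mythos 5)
Your proposal is correct and follows essentially the same route as the paper: the two right-hand columns are exact because they are (truncations of) the Gersten resolutions of $\H^q$ from \cite[24.11]{mvw}, the top-left corner is identified via Lemma \ref{H0(Hq)}, and the left column then follows by a formal diagram chase (which you organize neatly as ``the $E_i$ form the kernel subcomplex of the difference chain map''). The one point you flag as the real work --- that flat pullback along the two projections commutes with the tame-symbol differentials, so the squares commute --- is indeed the implicit content behind the paper's phrase ``commutative diagram,'' and your observation that it is standard for homotopy invariant sheaves with transfers (or cycle modules) is exactly right.
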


\begin{proof}
Exactness of the first row (i.e., that $H^0(\X,\H^q)$ is the equalizer)
is immediate from Lemma \ref{H0(Hq)}.
The two right-hand columns are exact, as they are obtained from the Gersten
resolutions for $\H^q$.  The homomorphisms which are known to be injective are
denoted $\rcofib$.  By an elementary diagram chase, the left-hand column is also
exact.  
\end{proof}

In order to prove Proposition \ref{prop:4thterm} 
it thus suffices to show that $E_1 \cong 0$ in Lemma \ref{lem:Gersten}.

%
%
%

\begin{lem} If $n\ge2$, $E_1 = \ker p_1 = \ker p_1'.$
\end{lem}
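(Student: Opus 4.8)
The plan is to prove the statement $E_1 = \ker p_1 = \ker p_1'$ by exhibiting $E_1$, the equalizer of $p_1$ and $p_1'$, as an honest kernel. The content is that the two tame-symbol maps $p_1, p_1': \bigoplus_{x\in X^{(1)}} K^M_{q-1}(k(x))/\ell \rightrightarrows \bigoplus_{y\in(X\times X)^{(1)}} K^M_{q-1}(k(y))/\ell$ agree on the equalizer only when both vanish. Recall that the equalizer $E_1$ consists of elements $\xi$ with $p_1(\xi)=p_1'(\xi)$; to say this equals $\ker p_1 = \ker p_1'$ is to say that $p_1\xi = p_1'\xi$ forces $p_1\xi = 0$.

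\emph{First I would} unwind what $p_1$ and $p_1'$ are geometrically. The two maps come from the two projections $X\times X\to X$, so for a codimension-$1$ point $x$ of $X$, the map $p_1$ records the contributions of $x\times X$ (the preimage under the first projection) while $p_1'$ records the contributions of $X\times x$ (the preimage under the second projection). A codimension-$1$ point $x\in X^{(1)}$ pulls back under the first projection to the generic point of $x\times X$, which is a codimension-$1$ point $y$ of $X\times X$; similarly for the second projection. The key combinatorial observation is that these two families of codimension-$1$ points of $X\times X$ are \emph{disjoint}: a point of the form $x\times X$ (with $x$ of codimension $1$, $X$ full-dimensional in the second factor) cannot coincide with a point of the form $X\times x'$, since the former projects dominantly to the second copy of $X$ while the latter projects to a proper closed subset. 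Hence $p_1$ and $p_1'$ land in complementary direct summands of the target $\bigoplus_{y\in(X\times X)^{(1)}} K^M_{q-1}(k(y))/\ell$.

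\emph{Given} this disjointness of supports, the conclusion is immediate: if $p_1\xi = p_1'\xi$, then since the left side is supported on the $\{x\times X\}$ summands and the right side on the $\{X\times x'\}$ summands, and these summands meet only in $0$, we must have $p_1\xi = 0$ and $p_1'\xi = 0$. Therefore $E_1 = \ker p_1 \cap \ker p_1' = \ker p_1 = \ker p_1'$, as claimed. The only point requiring care, and \emph{the main obstacle}, is the verification that the two sets of codimension-$1$ points genuinely have disjoint images in the index set $(X\times X)^{(1)}$; this uses $n\ge 2$ (so $d=\dim X>0$ and the generic fibers of the projections are positive-dimensional), which guarantees that $x\times X$ and $X\times x'$ are distinct codimension-$1$ subvarieties rather than degenerating onto each other. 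Once the disjointness is pinned down at the level of the Gersten complexes, the identification of the equalizer with the kernel is a formality.
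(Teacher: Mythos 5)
Your proof is correct and takes essentially the same route as the paper's: both observe that $p_1$ sends the $x$-summand into the summands indexed by generic points of $x\times X$ while $p_1'$ sends it into those indexed by generic points of $X\times x$, that these index sets are disjoint (using $n\ge2$, so $\dim X\ge1$), and hence that the equalizer is the common kernel $\ker p_1=\ker p_1'$. Your explicit justification of the disjointness via dominance of the two projections simply spells out what the paper leaves implicit.
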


\begin{proof}
Since $n>1$, we have $\dim X = \ell^{n-1}-1 \ge 1$.  
For any point $x\in X^{(1)}$ the summand indexed by $x$ is mapped
by $p_1$ and $p_1'$ to the summands indexed by the generic points of 
$x\times X$ and $X\times x$, respectively. Since these points (and
hence the summands) are distinct, the images of $p_1$ and $p_1'$
intersect in 0. It follows that their equalizer is
$\ker(p_1)=\ker(p_1')$, as asserted.
\end{proof}

\begin{prop}
If $X$ is a smooth variety of dimension $\ge1$,
then $p_1$ is injective.
\end{prop}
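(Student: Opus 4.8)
The plan is to reduce the proposition to the injectivity of a single field-restriction map, exploiting the block structure of $p_1$ already used in the preceding lemma. Since the first projection $\mathrm{pr}_1\colon X\times X\rto X$ is the base change of the smooth structure map $X\rto\Spec k$, it is smooth; hence the scheme-theoretic preimage $\mathrm{pr}_1^{-1}(\overline{\{x\}})=\overline{\{x\}}\times X$ is reduced and of codimension $1$, where we use $\dim X\ge1$. Consequently $p_1$ sends the summand at $x\in X^{(1)}$ to the summands indexed by the generic points $y$ of $\overline{\{x\}}\times X$, and the coefficient map is simply the restriction $K^M_{q-1}(k(x))/\ell\rto K^M_{q-1}(k(y))/\ell$ with multiplicity $1$. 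Because $\mathrm{pr}_1(y)=x$ for every such $y$, distinct source summands hit disjoint families of target summands, so $p_1$ is injective provided each of these restriction maps is.

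Next I would identify the targets geometrically. The projection $\overline{\{x\}}\times X\rto\overline{\{x\}}$ has generic fibre $X_{k(x)}=X\times_k k(x)$, so the points $y$ in question are the generic points of the irreducible components of the smooth $k(x)$-scheme $X_{k(x)}$ (which are disjoint, since smooth schemes are normal), and $k(y)$ is the function field of such a component. The crucial observation is that $X_{k(x)}$ possesses a canonical $k(x)$-rational point $P$: the point $x$ is a $k$-morphism $\Spec k(x)\rto X$, which by the universal property of base change is exactly a $k(x)$-point of $X_{k(x)}$. As $X$ is smooth, $P$ is a smooth rational point; let $C$ be the component through $P$, so that the generic point of $C$ is one of the target summands at $x$.

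The main step, and the only nonformal ingredient, is that the restriction along $k(x)\rto k(x)(C)$ is injective on mod-$\ell$ Milnor $K$-theory because $C$ is smooth and carries the rational point $P$. I would construct a specialization map $s_P\colon K^M_{q-1}(k(x)(C))\rto K^M_{q-1}(k(x))$ splitting the restriction, by choosing a regular system of parameters at $P$ and iterating the one-variable specialization of Milnor's exact sequence; then $s_P\circ\mathrm{res}=\mathrm{id}$, which remains valid after reducing mod $\ell$. Injectivity of restriction into the $C$-summand then forces the whole block at $x$ to be injective, and therefore $p_1$ is injective. The only delicate point is the possible reducibility of $X_{k(x)}$; handling it by working with the single component $C$ through $P$ resolves the issue, and it disappears entirely in the case of interest, since norm varieties are geometrically integral.
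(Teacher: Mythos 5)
Your proposal is correct and coincides with the paper's own proof in all essentials: your canonical rational point $P$ of $X_{k(x)}$ is precisely the paper's maximal ideal $\frakm=\ker\bigl(k(x)\otimes R\to k(x)\bigr)$, so your regular local ring at $P$ is the paper's $R'=(k(x)\otimes R)_{\frakm}$, with residue field $k(x)$ and fraction field the function field of the component through $P$. Both arguments then split the restriction by the same iterated Milnor $K$-theory specialization map along a regular system of parameters (the paper cites \cite[III.7.3]{WK}) and conclude, as you do, by noting that the blocks over distinct $x\in X^{(1)}$ land in disjoint families of target summands.
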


\begin{proof}
For each $x\in X^{(1)}$, let $y_x$ denote a generic point of 
$x\times X$; since $X$ is smooth, 
$x\times X$ is reduced. 
We will show that the composition of $p_1$ with the 
projection $\pi_x$ onto $K^M_{q-1}(k(y_x))/\ell$,
\[
\bigoplus_{x\in X^{(1)}} K^M_{q-1}(k(x))/\ell \map{p_1} 
\bigoplus_{y\in(X\times X)^{(1)}} K^M_{q-1}(k(y))/\ell \map{\pi_x}
K^M_{q-1}(k(y_x))/\ell,
\]
is an injection on the $x$-summand; since $\pi_x p_1$ is zero on all the
other summands of the left term, it will follow that $p_1$ is an injection. 

Fix $x$ and write $F$ for $k(X)$; as $X$ is smooth, 
the function field of $x\times X$ is 
a finite product of fields.
Choosing an affine neighborhood $\Spec R$ of $x$, $x$ is given
by a height~1 prime ideal $\frakp$ of $R$: $k(x)=\rmfrac(R/\frakp)$
and $F=\rmfrac(R)$. Note that $k(x)\oo R$ is a regular ring
because $X$ is smooth over $k$.
The kernel $\frakm$ of the multiplication map
\[
k(x)\oo R \rto k(x)\oo k(x) \map{\mu} k(x)
\]
is a maximal ideal of $k(x)\oo R$, and the localization $R'=(k(x)\oo
R)_{\frakm}$ at $\frakm$ is a regular local ring with residue field $k(x)$ and
fraction field $k(y_x)$.  Choose a regular sequence $r_1,\dots,r_d$ generating
the maximal ideal of $R'$; by iterated use of \cite[III.7.3]{WK}, there is a
{\it specialization map}
\[
K^M_*(k(y_x)) \map{\lambda} K^M_*(k(x))
\]
which is a left inverse to the component 
$p_1^x:K^M_*(k(x))\rto K^M_*(k(y_x))$ of $p_1$.
\end{proof}

Proposition \ref{prop:4thterm} now follows for $n\ge2$, 
since norm varieties are smooth by definition.  
This completes the proof of Theorem~\ref{thm:2->l}.

\bibliographystyle{alpha} 
\bibliography{WZ}

\end{document}